\definecolor{rouge}{rgb}{0.7,0.00,0.00}
\definecolor{vert}{rgb}{0.00,0.5,0.00}
\definecolor{bleu}{rgb}{0.00,0.00,0.8}
\newtheorem{theorem}{Theorem}[section]
\newtheorem*{theorem*}{Theorem}
\newtheorem{lemma}[theorem]{Lemma}
\newtheorem{proposition}[theorem]{Proposition}
\newtheorem{condition}{Condition}
\theoremstyle{definition}
\newtheorem{example}[theorem]{Example}
\newtheorem{remark}[theorem]{Remark}
\def \eref#1{\hbox{(\ref{#1})}}
\numberwithin{equation}{section}
\def\geq{\geqslant}
\def\leq{\leqslant}
\def\RR{\mathbb{R}}
\def\EE{\mathbb{E}}
\def\vare{{\varepsilon}}
\def \eref#1{\hbox{(\ref{#1})}}
\def\EE{\mathbb{ E}}
\def\ep{{\epsilon}}
\begin{document}

\title[Strong and weak convergent rates for SDEs ]
{Strong and weak convergence rates for slow-fast stochastic differential equations driven by $\alpha$-stable process}

\author{Xiaobin Sun}
\curraddr[Sun, X.]{ School of Mathematics and Statistics, RIMS, Jiangsu Normal University, Xuzhou, 221116, China}
\email{xbsun@jsnu.edu.cn}

\author{Longjie Xie}
\curraddr[Xie, L.]{ School of Mathematics and Statistics, RIMS, Jiangsu Normal University, Xuzhou, 221116, China}
\email{longjiexie@jsnu.edu.cn}

\author{Yingchao Xie}
\curraddr[Xie, Y.]{ School of Mathematics and Statistics, RIMS, Jiangsu Normal University, Xuzhou, 221116, China}
\email{ycxie@jsnu.edu.cn}

\begin{abstract}
In this paper, we study the averaging principle for a class of stochastic differential equations
driven by $\alpha$-stable processes with slow and fast time-scales, where $\alpha\in(1,2)$.
We prove that the strong and weak convergence order are $1-1/\alpha$ and $1$ respectively.
We show, by a simple example, that  $1-1/\alpha$ is the optimal strong convergence rate.
\end{abstract}

\subjclass[2000]{ Primary 34D08, 34D25; Secondary 60H20}
\keywords{Averaging principle; Stochastic differential equations; $\alpha$-stable process; Slow-fast; Poisson equation; Strong and weak convergent rate.}

\maketitle

\section{Introduction}

Multiscale models involving ``slow" and ``fast" components appear naturally in various fields,
such as  nonlinear oscillations, chemical kinetics, biology, climate dynamics, etc, see, e.g., \cite{BR, ELV,  HKW, PS} and the references therein.
The averaging principle of multiscale models describes the asymptotic behavior of the slow components as the scale parameter $\ep\to 0$.
In \cite{K1}, Khasminskii considered a class of multiscale stochastic differential equations (SDEs for short) driven by Wiener noise, i.e.,
\begin{eqnarray*}\left\{\begin{array}{l}
\displaystyle
d X^{\ep}_t = A(X^{\ep}_{t}, Y^{\ep}_{t})dt+ d W_t,\quad X^{\ep}_0=x\in\RR^{d}, \nonumber\\
\displaystyle d Y^{\ep}_t =\frac{1}{\ep}B(X^{\ep}_{t},  Y^{\ep}_{t})dt+\frac{1}{\sqrt{\ep}}d W_t,\quad Y^{\ep}_0=y\in\RR^{d},
\end{array}\right.\nonumber
\end{eqnarray*}
where $W_t$ is a $d$-dimensional Brownian motion, $\ep$ is a small and positive parameter which describes the ratio of the time scale between the slow component $X^{\ep}_t$ and fast component $Y^{\ep}_t$, the coefficients $A, B:\RR^{d} \times \RR^{d} \rightarrow \RR^{d}$  are assumed to be Lipschitz continuous. It is assumed
that there exists a map $\bar{A}:\RR^{d }\rightarrow \RR^{d }$   such that
$$
\left|\frac{1}{T}\int^T_0 \EE A(x,Y^{x,y}_t)dt-\bar{A}(x)\right|\leq \alpha(T)(1+|x|^2)
$$
for some function $\alpha(T)$ vanishing as $T$ goes to infinity, where $Y^{x,y}_t$ is the unique solution of the corresponding frozen equation
$$
d Y^{x,y}_t =B(x, Y^{x,y}_{t})dt+d W_t,\quad Y^{x,y}_0=y\in\RR^{d }.
$$
Then Khasminskii proved that the slow component $X^{\ep}_t$ converges to $\bar{X}_t$ in  probability, where $\bar{X}_t$ is the unique solution of the corresponding averaged equation,
$$
d \bar{X}_t =\bar A(\bar{X})dt+d W_t,\quad \bar{X}_0=x\in\RR^{d}.
$$
Since this pioneering work, many people studied the averaging principle for  stochastic systems driven by Wiener noise, see e.g. \cite{K1,KY2,KY,Ki,Ki2,L1,V0,LRSX1} for the averaging principle of SDEs and \cite{C1,C2,CF,DSXZ,FL,FLL,FWLL,GP1,GP2,GP3,GP4,WR} for the averaging principle of stochastic partial differential equations (SPDEs for short).

\vspace{1mm}
All the papers mentioned above considered stochastic systems with continuous paths. However, in many applications,  systems driven by discontinuous noises appear naturally. There have been many papers devoted to study the averaging principle for slow-fast stochastic systems driven by jump noises, see e.g. \cite{GD,L2,X,XML1,XML2,ZFWL}. But in these papers, the noises are assumed to have  second order moments in order to obtain the usual energy estimates. This excludes the important $\alpha$-stable noise with $\alpha\in (0,2)$. The discontinuity  and the heavy tail property   make the $\alpha$-stable noise a useful
driving process in models arising in physics, telecommunication networks, finance and other fields, see e.g. \cite{A,BYY,PZ} and the references therein for more  backgrounds. Slow-fast stochastic systems driven by $\alpha$-stable noises have attracted more attention recently. Zhang et al.\cite{ZCZCDL} studied data assimilation and parameter estimation for a multiscale stochastic system with $\alpha$-stable noise. Zulfiqar et al. \cite{ZYHD} studied slow manifolds of a slow-fast stochastic evolutionary system with stable L\'evy noise. Bao et al. \cite{BYY} studied the strong averaging principle for two-time scale SPDEs driven by $\alpha$-stable noise. In \cite{SZ} and \cite{CSS}, the first named authors
and his collaborators studied the strong averaging principle for stochastic Ginzburg-Landau equation and stochastic Burgers equations driven by $\alpha$-stable processes, respectively.

\vspace{1mm}
There are also many works devoted to studying the rate of convergence in the averaging principle. The main motivation comes from the well-known Heterogeneous Multi-scale Methods  used to approximate the slow component, see e.g. \cite{Br3, ELV}. Furthermore, the rate of convergence is also known to be very important for functional limit theorems in probability theory and homogenization, see e.g. \cite{KY,PV1,PV2,WR}.
Strong and weak convergence rates for slow-fast stochastic systems with Wiener noise have been studied extensively, e.g., see \cite{GKK, L1, L2, ZFWL, RSX} for the finite dimension case and \cite{B1, B2, DSXZ, FWLL} for the infinite dimension case. Usually, Khasminskii's time discretization technique is   used to study the strong convergence rate  while the method of asymptotic expansion of solutions of Kolmogorov equations
is  used to study the  weak convergence rate.
We mention that in \cite{BYY,CSS,SZ}, because of the time discretization method used therein, no satisfactory
convergence rates were obtained.

\vspace{1mm}
In this paper, we consider the following multiscale  SDEs
driven by $\alpha$-stable processes:
\begin{equation}\left\{\begin{array}{l}\label{Equation}
\displaystyle
d X^{\ep}_t = b(X^{\ep}_{t}, Y^{\ep}_{t})dt+d L^{1}_t,\quad X^{\ep}_0=x\in\RR^{d_1}, \\
\displaystyle d Y^{\ep}_t =\frac{1}{\ep}f(X^{\ep}_{t},  Y^{\ep}_{t})dt+\frac{1}{\ep^{1/\alpha}}d L^{2}_t,\quad Y^{\ep}_0=y\in\RR^{d_2},
\end{array}\right.
\end{equation}
where $\{L^{1}_t\}_{t\geq 0}$ and $\{L^{2}_t\}$ are independent $d_1$ and $d_2$ dimensional isotropic $\alpha$-stable processes  with $\alpha \in (1, 2)$,  $b: \RR^{d_1}\times\RR^{d_2} \rightarrow \RR^{d_1}$ and $f:\RR^{d_1}\times\RR^{d_2}\rightarrow \RR^{d_2}$ are Borel  functions  whose regularity conditions will be stated below.

\vspace{1mm}

It is well known that for systems driven by Wiener noise, the optimal strong  convergence rate is $1/2$ and the weak convergence rate is $1$. A natural   question is that: for systems driven by $\alpha$-stable noises, what are the optimal strong and weak
convergence rates? The purpose of this paper is to establish both the strong and the weak convergence rates
for the stochastic system \eref{Equation}. We will prove that the weak convergence order is still $1$, but the strong convergence order is $1-1/\alpha$. We show, by a simple example,
that the strong convergence rate $1-1/\alpha$ is optimal.

\vspace{1mm}
The main technique used in this paper is  the Poisson equation, which is inspired from \cite{B2} (see also \cite{PV1,PV2,RSX,RSX2}). In contrast to the classical Khasminskii's time discretization, it has great advantage in obtaining the rates of convergence.
More precisely, we need to study the  following Poisson equation in $\RR^{d_2}$:
\begin{align}
-\mathscr{L}_2(x,y)\Phi(x,y)=b(x,y)-\bar{b}(x),\quad y\in\RR^{d_2},  \label{PE1}
\end{align}
where $x\in\RR^{d_1}$ is a parameter and $\mathscr{L}_2(x,y)$ is the non-local operator defined by (\ref{L_2}) below. The main difficulty lies in analyzing the regularity of the solution   $\Phi(x,y)$ of (\ref{PE1}).
In the case of Wiener noise, one needs the $C^2$-regularity of $\Phi$  with respect to $x$
in order to apply It\^o's formula with respect to $X^{\ep}_t$.
However, in the case of $\alpha$-stable noise, $C^{\alpha+}$-regularity of $\Phi$ with respect to $x$ is enough for the application of It\^o's formula, where $\varphi\in C^{\alpha+}(\RR^{d_1})$ means there exists a constant $\gamma>\alpha$ such that $\varphi\in C^{\gamma}(\RR^{d_1})$ (the detailed definition is given below). Thus, in this paper  we will only assume that
$b$ has $C^{\alpha+}$-regularity with respect to the $x$ variable.

\vspace{1mm}

The organization of this paper is as follows. In the next section, we introduce some notation and state our main results. Section 3 is devoted to study  the regularity of the solution for the Poisson equation. The strong and weak convergence rates are proved in Subsections 4.1 and 4.2 respectively. Finally,  some basic properties for the $\alpha$-stable process as well as  some a priori estimates for the solutions of the system (\ref{Equation}), the frozen equation (\ref{FEQ}) and averaged equation (\ref{1.3}) are given in the Appendix.
Throughout this paper, $C$ and $C_T$ stand for constants whose value may change from line to line, and $C_T$ is used to emphasize  that the constant depends on $T$.

\section{Notations and Main results}\label{sec.prelim}

We first introduce some notation throughout this paper.
$\RR^d$ stands for the $d$-dimensional Euclidean space and $\mathbb{N}_{+}$ stands for the collection of all the positive integers. We will use $|\cdot|$ and $\langle\cdot, \cdot\rangle$
to denote the Euclidean norm and Euclidean inner product respectively. We use $\|\cdot\|$ to denote the matrix norm. For any $k\in \mathbb{N}_{+}$ and $\delta\in (0, 1)$, we define
\begin{align*}
C^k(\RR^d):=&\{u: \RR^d\to \RR: u \mbox{ and all its partial derivatives up to order } k \mbox{ are continuous}\},\\
C^k_b(\RR^d):=&\{u\in C^k(\RR^d): \mbox{ for } 1\le i\le k, \mbox{ the $i$-the partial derivatives of } u \mbox{ are bounded} \},\\
C^{k+\delta}_b(\RR^d):=&\{u\in C^k_b(\RR^d):\!\mbox{ all the $k$-th order partial derivatives of }
u \mbox{ are $\delta$-H\"{o}lder continuous}\}.
\end{align*}
For any $k\in\mathbb{N}_{+}, \delta\in (0,1)$, the spaces $C^{k}_b(\RR^d)$ and $C^{k+\delta}_b(\RR^d)$
when equipped with the usual norm $\|\cdot\|_{C^{k}_b}$ and $\|\cdot\|_{C^{k+\delta}_b}$,
are Banach spaces.

For $k_1,k_2\in\mathbb{N}_{+}$, $0\leq \delta_1,\delta_2< 1$ and a real-valued function
on $\RR^{d_1}\times\RR^{d_2}$, the notation $u\in C^{k_1+\delta_1,k_2+\delta_2}_b$
means that (i) for all $d_1$-tuple $\beta$ and $d_2$-tuple $\gamma$ with $0\le |\beta|\le k_1$
$0\le |\gamma|\le k_2$ and $|\beta|+|\gamma|\ge 1$, the partial derivative $\partial^{\beta}_{x}\partial^{\gamma}_{y}u$ is bounded continuous; and (ii) for any $|\beta|= k_1$ and
$0\leq |\gamma|\leq 1$, $\partial^{\beta}_{x}\partial^{\gamma}_yu$ is $\delta_1$-H\"{o}lder continuous with respect to $x$ with index $\delta_1$  uniformly in $y$, and for any $|\gamma|=k_2$, $\partial^{\gamma}_{y}u$ is $\delta_2$-H\"{o}lder continuous with respect to $y$ with index $\delta_2$  uniformly in $x$.



\vskip 0.2cm

Our first result is as follows.

\begin{theorem}(\textbf {Strong convergence})\label{main result 1}
Suppose that $b\in C^{1+\gamma,2+\delta}_b$ and $f\in C^{1+\gamma,2+\gamma}_b$ with some $\gamma\in(\alpha-1, 1)$ and $\delta\in(0,1)$,
and that there exists $\beta>0$ such that for any $x\in\RR^{d_1}$, $y_1,y_2\in\RR^{d_2}$,
\begin{eqnarray}
\sup_{x\in\RR^{d_1}}|f(x,0)|<\infty,\quad \langle f(x, y_1)-f(x, y_2), y_1-y_2\rangle \leq -\beta|y_1-y_2|^2.\label{sm}
\end{eqnarray}
Then for any initial value $(x,y)\in\RR^{d_1}\times\RR^{d_2}$, $T>0$ and $p\in [1,\alpha)$, we have
\begin{eqnarray}
\mathbb{E}\left(\sup_{t\in[0,T]}|X_{t}^{\ep}-\bar{X}_{t}|^p\right)\leq C\ep^{p(1-1/\alpha)}, \label{R1}
\end{eqnarray}
where $C$ is a positive constant
depending on $p, T, |x|, |y|$,
and $\bar{X}$ is the solution of the following averaged equation:
\begin{equation}
d\bar{X}_{t}=\bar{b}(\bar{X}_t)dt+d L^{1}_t,\quad\bar{X}_{0}=x, \label{1.3}
\end{equation}
where $\bar{b}(x)=\int_{\RR^{d_2}}b(x,y)\mu^{x}(dy)$, and $\mu^{x}$ denotes the unique invariant measure for the transition semigroup of the corresponding frozen equation
\begin{equation}\label{FEQ}
dY_{t}=f(x,Y_{t})dt+d L_{t}^{2},\quad Y_{0}=y.
\end{equation}
\end{theorem}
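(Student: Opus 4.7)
My approach is the Poisson-equation strategy described in the introduction, which extracts the rate without the loss incurred by Khasminskii's time-discretization. I would start from the splitting
\begin{align*}
X_t^{\ep}-\bar{X}_t=\int_0^t\bigl[b(X_s^{\ep},Y_s^{\ep})-\bar{b}(X_s^{\ep})\bigr]ds+\int_0^t\bigl[\bar{b}(X_s^{\ep})-\bar{b}(\bar{X}_s)\bigr]ds,
\end{align*}
in which the Lipschitz continuity of $\bar{b}$ (inherited from the regularity of $b$ and $f$ together with ergodicity of the frozen equation, both verified in the appendix) reduces the second integral to a Gronwall term, so that the entire averaging error is concentrated in the first integral.

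To control the first integral, I use the Poisson equation $-\mathscr{L}_2(x,y)\Phi(x,y)=b(x,y)-\bar{b}(x)$ solved in Section~3 (with $x$ a parameter). Under $b\in C^{1+\gamma,2+\delta}_b$ with $\gamma>\alpha-1$ and the dissipativity of $f$, its solution $\Phi$ is bounded, is $C^{1+\gamma}$ in $x$ (in particular $C^{\alpha+}$, which is precisely the regularity required to run It\^o's formula against the $\alpha$-stable driver $L^1$), and is sufficiently smooth in $y$. Writing the generator of $(X^{\ep},Y^{\ep})$ as $\mathscr{L}_1+\frac{1}{\ep}\mathscr{L}_2$ and applying It\^o's formula to $\Phi(X_t^{\ep},Y_t^{\ep})$, the defining identity $\mathscr{L}_2\Phi=-(b-\bar{b})$ produces, after rearrangement,
\begin{align*}
\int_0^t\!\bigl[b(X_s^{\ep},Y_s^{\ep})-\bar{b}(X_s^{\ep})\bigr]ds=\ep\bigl[\Phi(x,y)-\Phi(X_t^{\ep},Y_t^{\ep})\bigr]+\ep\!\int_0^t\!\mathscr{L}_1\Phi(X_s^{\ep},Y_s^{\ep})\,ds+\ep M_t^{(1)}+\ep M_t^{(2)},
\end{align*}
where $M^{(1)}$ and $M^{(2)}$ are the compensated Poisson integrals generated by the jumps of $L^1$ and $L^2$, respectively.

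Each of the four terms is then estimated in $L^p$ for $p\in[1,\alpha)$. Boundedness of $\Phi$ and $\mathscr{L}_1\Phi$ gives an order-$\ep$ control of the first two; since the integrand of $M^{(1)}$ is bounded and $L^1$ is $\alpha$-stable, a standard $p$-th moment inequality gives a contribution of the same order for $\ep M^{(1)}$. The principal contribution is
\begin{align*}
\ep M_t^{(2)}=\ep\!\int_0^t\!\!\int_{\RR^{d_2}}\!\!\bigl[\Phi(X_{s-}^{\ep},Y_{s-}^{\ep}+z/\ep^{1/\alpha})-\Phi(X_{s-}^{\ep},Y_{s-}^{\ep})\bigr]\widetilde{N}^2(ds,dz),
\end{align*}
in which the $\ep^{-1/\alpha}$ scaling inside $\Phi$ interacts with the $\alpha$-stable L\'evy measure of $L^2$. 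Using the smoothness of $\Phi$ in $y$, a $p$-th moment inequality for compensated $\alpha$-stable Poisson integrals, and the self-similarity identity $\nu_2(\cdot/\ep^{1/\alpha})=\ep^{-1}\nu_2(\cdot)$, one arrives at $\EE\sup_{t\leq T}|M_t^{(2)}|^p\leq C_T\,\ep^{-p/\alpha}$. Multiplying by $\ep^p$ gives the dominant error of order $\ep^{p(1-1/\alpha)}$; feeding this bound back into the split and applying Gronwall yields \eref{R1}.

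The main difficulty is precisely this sharp estimate on $M^{(2)}$: the $\ep^{-1/\alpha}$ scaling inside the integrand has to be balanced against the $\alpha$-stable compensator, and the resulting exponent $1-1/\alpha$ is a direct signature of the self-similarity of $L^2$. Selecting a $p$-th moment inequality tuned to the $\alpha$-stable scaling, and combining it with the quantitative H\"older-in-$y$ regularity of $\Phi$ delivered by Section~3, is what produces the stated rate.
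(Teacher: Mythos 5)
Your proposal follows the paper's proof essentially step for step: the same splitting of $X^{\ep}_t-\bar{X}_t$, the Gronwall reduction to the averaging integral, the Poisson equation $-\mathscr{L}_2\Phi=b-\bar b$, the It\^o rearrangement producing the factor $\ep$ in front of the boundary terms, the $\mathscr{L}_1\Phi$ integral and the two martingales, and the identification of the $\ep^{-p/\alpha}$ blow-up coming from the $\ep^{-1/\alpha}$ scaling against the stable compensator in $M^{(2)}$. One of your justifications is wrong, however, even though the final rate survives: $\Phi$ is \emph{not} bounded under the hypotheses of Theorem \ref{main result 1}. The class $C^{1+\gamma,2+\delta}_b$ bounds the derivatives of $b$ but not $b$ itself (boundedness of $b$ is only added later, for the weak rate), and the paper establishes only the linear-growth bound $\sup_x|\Phi(x,y)|\le C(1+|y|)$ in \eref{E1}. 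Consequently the boundary term $\EE\bigl(\sup_{t\le T}|\Phi(X_t^{\ep},Y_t^{\ep})|^p\bigr)$ is not $O(1)$: it must be controlled through $\EE\bigl(\sup_{t\le T}|Y_t^{\ep}|^p\bigr)\le C\ep^{-p/\alpha}$ from \eref{Yvare}, and hence it contributes at the order $\ep^{p(1-1/\alpha)}$ after multiplication by $\ep^p$ --- it is a second, equally dominant source of the rate, not an order-$\ep$ remainder as you claim. Likewise $\mathscr{L}_1\Phi$ is not bounded (it grows in $y$ through the H\"older seminorm of $\nabla_x\Phi$ and in $x,y$ through the unbounded drift $b$); that term is of order $\ep^p$ not by boundedness but because it sits inside a time integral, so the uniform-in-time bound $\sup_{t\ge 0}\EE|Y_t^{\ep}|^p\le C$ applies instead of the supremum bound. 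With these two justifications repaired, your argument coincides with the paper's.
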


\begin{example}
The estimate \eref{R1} implies that the strong convergent order is $1-1/\alpha$.
Following \cite[Section 2]{L1}, we  show via an simple example that $1-1/\alpha$ is the optimal order.  Consider
\begin{equation}\left\{\begin{array}{l}
\displaystyle
dX^{\ep}_t=Y^{\ep}_t dt+dL^1_t,\quad X^{\ep}_0=x\in\mathbb{R}, \nonumber\\
\displaystyle dY^{\ep}_t=-\frac{1}{\ep}Y^{\ep}_tdt+\frac{1}{\ep^{1/\alpha}}dL^2_t,\quad Y^{\varepsilon}_0=0\in\mathbb{R}, \nonumber\\
\end{array}\right.
\end{equation}
where $\{L^{1}_t\}_{t\geq 0}$ and $\{L^{2}_t\}_{t\geq 0}$ are independent $1$-dimensional symmetric $\alpha$-stable process.
The solution of the equation above is given by
\begin{equation}\left\{\begin{array}{l}
\displaystyle
X^{\varepsilon}_t=x+\int^t_0 Y^{\ep}_s ds+L^1_t, \nonumber\\
\displaystyle Y^{\varepsilon}_t=\frac{1}{\ep^{1/\alpha}}\int^{t}_0 e^{-(t-s)/\ep}dL^2_s. \nonumber\\
\end{array}\right.
\end{equation}
Note that the corresponding frozen equation is
$$
dY_t=-Y_tdt+dL^2_t,\quad Y_0=0,
$$
which has a unique solution $Y_t=\int^{t}_0 e^{-(t-s)}dL^2_s$. Then it is easy to prove $\{Y_t, t\geq 0\}$ has a unique invariant measure with mean zero.
Thus, the corresponding averaged equation is given by
$$\bar{X}_t=x+L^1_t.$$
As a result, we have for $0<p<\alpha$,
$$\EE\left|X^{\ep}_t-\bar{X}_t\right|^p=\EE\left|\int^t_0 Y^{\ep}_s ds\right|^p.$$
Put $Z^{\ep}_t:=\int^t_0 Y^{\ep}_s ds$, then it is easy to see that
$$Z^{\ep}_t=\frac{1}{\ep^{1/\alpha}}\int^t_0 \int^s_0 e^{-\frac{1}{\ep}(s-r)}dL^2_r ds=\frac{1}{\ep^{1/\alpha}}\int^t_0 \left[ \int^t_r e^{-\frac{1}{\ep}(s-r)}ds \right]dL^2_r.$$
Refer to \cite[(2.4)]{PZ}, for any continuous function $f: [0, t]\rightarrow R$, we have
$$
\EE\left[e^{ih\int^t_0f_s dL^2_s}\right]=\exp\left\{-\int^t_0\psi (f_s h)ds\right\},\quad h\in \mathbb{R},
$$
where $\psi(x)=C_{\alpha}|x|^{\alpha}$. As a result,  the characteristic function of $Z^{\ep}_t$ is given by
\begin{eqnarray*}
\varphi_{Z^{\ep}_t}(h):=\!\!\!\!\!\!\!\!&&\EE\left( e^{ihZ^{\ep}_t}\right)=\exp\left\{-\int^t_0 \psi\left(\frac{h}{\ep^{1/\alpha}}\int^t_r e^{-\frac{1}{\ep}(s-r)}ds \right)dr\right\}\\
=\!\!\!\!\!\!\!\!&&\exp\left\{-\int^t_0 C_{\alpha}(1-e^{-\frac{r}{\ep}})^{\alpha}dr \left(\ep^{1-1/\alpha}\right)^\alpha |h|^{\alpha}\right\},\quad h\in\RR.
\end{eqnarray*}
Refer to \cite[(3.2)]{PZ}, for any symmetric real $\alpha$-stable random variable $X$ has the the characteristic function:
\begin{eqnarray*}
\EE\left[e^{ihX}\right]=\exp\{-\sigma^{\alpha}|h|^{\alpha}\},
\end{eqnarray*}
for some $\sigma\geq 0$, then we have
\begin{eqnarray*}
\EE|X|^p=C_{\alpha, p}\sigma^p, \quad 0<p<\alpha.
\end{eqnarray*}
Thus, it is easy to see that
\begin{eqnarray*}
\EE\left|\int^t_0 Y^{\ep}_s ds\right|^p=\!\!\!\!\!\!\!\!&&C_{\alpha,p}\left[\int^t_0 \left(1-e^{-\frac{r}{\ep}}\right)^{\alpha}dr\right]^{p/\alpha} \left(\ep^{1-1/\alpha}\right)^p,
\end{eqnarray*}
which implies the desired result.
\end{example}

The following is our second main result about the weak convergence rate.
\begin{theorem}(\textbf{Weak convergence})\label{main result 2}
Suppose that the assumptions in Theorem \ref{main result 1} holds. Assume further that
$b$ is uniformly bounded and $b, f\in C^{2+\gamma,2+\gamma}_b$  with $\gamma\in (\alpha-1, 1)$.
Then for any $\phi\in C^{2+\gamma}_b$, initial value $(x,y)\in\RR^{d_1}\times\RR^{d_2}$ and $T>0$, we have
\begin{eqnarray}
\sup_{t\in[0, T]}\left|\mathbb{E}\phi(X_{t}^{\ep})-\EE \phi(\bar{X}_{t})\right|\leq C\ep, \label{R3}
\end{eqnarray}
where $C$ is a positive constant
depending on $T$,$\|\phi\|_{C^{2+\gamma}_b}$, $|x|$ and $|y|$,
and $\bar{X}$ is the solution of the averaged equation \eref{1.3}.
\end{theorem}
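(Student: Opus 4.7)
The plan is to follow the asymptotic-expansion method based on the Poisson equation \eqref{PE1}, which is precisely the approach the paper has set up in Section~3. Introduce the solution $\bar u(t,x):=\mathbb{E}\phi(\bar X_{T-t}^{x})$ of the backward Kolmogorov equation associated with the averaged generator $\bar{\mathscr L}:=\bar b(x)\cdot\nabla_x+\mathscr{L}_1$, where $\mathscr{L}_1$ is the $\alpha$-stable generator of $L^{1}$. Since $\phi\in C^{2+\gamma}_{b}$ and, thanks to Section~3 together with $b,f\in C^{2+\gamma,2+\gamma}_{b}$, one has $\bar b\in C^{2+\gamma}_{b}$, the standard a~priori theory for the averaged SDE \eqref{1.3} (which the paper defers to the Appendix) will give $\bar u(t,\cdot)\in C^{2+\gamma}_{b}$ with bounds uniform in $t\in[0,T]$.

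First I would write
\begin{equation*}
\mathbb{E}\phi(X_{T}^{\ep})-\mathbb{E}\phi(\bar X_{T})
=\mathbb{E}\bar u(T,X_{T}^{\ep})-\bar u(0,x).
\end{equation*}
Applying It\^o's formula for the $\alpha$-stable-driven slow equation to $\bar u(t,X_{t}^{\ep})$, using $\partial_{t}\bar u+\bar{\mathscr L}\bar u=0$ and the fact that $\bar u$ is independent of $y$, the drift/generator terms combine to give
\begin{equation*}
\mathbb{E}\phi(X_{T}^{\ep})-\mathbb{E}\phi(\bar X_{T})
=\mathbb{E}\int_{0}^{T}\bigl\langle b(X_{s}^{\ep},Y_{s}^{\ep})-\bar b(X_{s}^{\ep}),\nabla_x\bar u(s,X_{s}^{\ep})\bigr\rangle\,ds,
\end{equation*}
so weak convergence reduces to gaining one power of $\ep$ from the centered integrand $b-\bar b$.

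The decisive step is to insert the corrector. Let $\Phi$ be the Section~3 solution of the Poisson equation \eqref{PE1} and define
\begin{equation*}
\Psi(s,x,y):=\bigl\langle\Phi(x,y),\nabla_x\bar u(s,x)\bigr\rangle,
\qquad\text{so that }-\mathscr{L}_2(x,y)\Psi=\langle b-\bar b,\nabla_x\bar u\rangle.
\end{equation*}
Now apply It\^o's formula to $\Psi(s,X_{s}^{\ep},Y_{s}^{\ep})$; the fast generator appears with a factor $1/\ep$, so rearranging gives
\begin{equation*}
\mathbb{E}\int_{0}^{T}\langle b-\bar b,\nabla_x\bar u\rangle(s,X_{s}^{\ep},Y_{s}^{\ep})\,ds
=\ep\,\mathbb{E}\bigl[\Psi(0,x,y)-\Psi(T,X_{T}^{\ep},Y_{T}^{\ep})\bigr]
+\ep\,\mathbb{E}\int_{0}^{T}\bigl(\partial_{s}\Psi+\mathscr L_{1}^{\ep}\Psi\bigr)(s,X_{s}^{\ep},Y_{s}^{\ep})\,ds,
\end{equation*}
where $\mathscr L_{1}^{\ep}$ denotes the slow generator acting in the $(s,x)$ variables (drift $b\cdot\nabla_x$ plus the non-local part from $L^{1}$), and the martingale contributions vanish in expectation thanks to the boundedness estimates on $\Phi$. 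Each term on the right is manifestly $O(\ep)$, which is exactly the claimed rate.

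The main obstacle, and the place requiring the greatest care, is controlling all the terms on the right-hand side uniformly in $\ep$. This splits into two regularity issues: (i) the $C^{2+\gamma}_{b}$-type bounds on $\bar u(t,\cdot)$, for which one differentiates $\bar u(t,x)=\mathbb{E}\phi(\bar X_{T-t}^{x})$, differentiates the averaged SDE with respect to its initial data, and exploits $\bar b\in C^{2+\gamma}_{b}$; (ii) the regularity of the corrector $\Phi$, namely bounds on $\Phi$, $\nabla_x\Phi$, $\nabla_y\Phi$, and on the H\"older norms needed so that $\mathscr L_{1}^{\ep}\Psi$ is well-defined (recall that applying the non-local generator of an $\alpha$-stable process only requires $C^{\alpha+}$ regularity in $x$, which is precisely why the hypothesis $\gamma\in(\alpha-1,1)$ is sharp). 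Assuming these estimates from Section~3 and the Appendix, the bounded-drift assumption on $b$ together with the uniform moment estimates on $(X_{t}^{\ep},Y_{t}^{\ep})$ yield
\begin{equation*}
\bigl|\mathbb{E}\phi(X_{T}^{\ep})-\mathbb{E}\phi(\bar X_{T})\bigr|\le C\,\ep,
\end{equation*}
with $C$ depending on $T$, $\|\phi\|_{C^{2+\gamma}_{b}}$, $|x|$ and $|y|$, and since $T$ can be replaced by any $t\in[0,T]$ in the whole argument, taking the supremum over $t$ completes the proof of \eqref{R3}.
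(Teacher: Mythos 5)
Your proposal is correct and follows essentially the same route as the paper: reduce to the centered integrand via the backward Kolmogorov equation for the averaged process, then gain the factor $\ep$ by applying It\^o's formula to a corrector solving the Poisson equation with right-hand side $\langle b-\bar b,\nabla_x\bar u\rangle$. The only cosmetic difference is that you write the corrector explicitly as $\Psi=\langle\Phi,\nabla_x\bar u\rangle$, whereas the paper constructs it as $\tilde\Phi^t(s,x,y)=\int_0^\infty[\EE F^t(s,x,Y_r^{x,y})-\bar F^t(s,x)]\,dr$ with $F^t=\langle b,\nabla_x\tilde u^t\rangle$; since $\mathscr{L}_2$ acts only in $y$ and $\nabla_x\bar u$ is $y$-independent, these two objects coincide.
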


\section{Non-local Poisson equation}

This section is devoted to study the Poisson equation. In Subsection 3.1, we prove the exponential ergodicity for the transition semigroup of the frozen equation (\ref{FEQ}). In Subsection 3.2, we prove the well-posedness as well as the regularities of the solution for the Poisson equation (\ref{PE1}).

\subsection{The frozen equation}
Let  $Y_{t}^{x,y}$ satisfies the frozen equation
\begin{equation*}
dY_{t}=f(x,Y_{t})dt+d L_{t}^{2},\quad Y_{0}=y\in \RR^{d_2}.
\end{equation*}
Some moment estimates for $Y_t^{x,y}$ are collected in Lemma \ref{L3.2} in Appendix.
Note that for any $\vare>0$, define $\{\tilde{L}^2_{t}:=\frac{1}{\vare^{1/\alpha}}L^2_{t\vare}, t\geq 0\}$, then $\tilde{L}^2_{t}$ is again
an $\alpha$-stable process and
\begin{eqnarray}
Y^{\vare}_{t\vare}=y+\frac{1}{\vare}\int^{t\vare}_0 f(X^{\vare}_s, Y^{\vare}_s)ds+\frac{1}{\vare^{1/\alpha}}L^2_{t\vare}
=y+\int^{t}_0f(X^{\vare}_{s\vare}, Y^{\vare}_{s\vare})ds+\tilde{L}^2_{t}.\label{R3.11}
\end{eqnarray}
This explains the scaling $\vare^{1/\alpha}$ in the fast component of (\ref{Equation}).
Here, we prove the following results, which will be used below.

\begin{lemma}\label{L3.6}
Assume that $f\in C^{1,1}_b$ and condition \eref{sm} holds. Then for any $t\geq 0$, $x_i\in\RR^{d_1}$, and $y_i\in\RR^{d_2}$, $i=1,2$, we have
\begin{eqnarray*}
\left|Y^{x_1,y_1}_t-Y^{x_2,y_2}_t\right|\leq e^{-\frac{\beta t}{2}}|y_1-y_2|+C|x_1-x_2|,
\end{eqnarray*}
where $C$ is a constant independent of $t$.
\end{lemma}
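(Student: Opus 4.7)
The plan is to exploit the fact that both processes $Y^{x_1,y_1}_t$ and $Y^{x_2,y_2}_t$ are driven by the \emph{same} stable process $L^2_t$, so the difference $Z_t := Y^{x_1,y_1}_t - Y^{x_2,y_2}_t$ satisfies a pathwise, deterministic-in-drift ODE with no jump term:
\begin{equation*}
\frac{d}{dt} Z_t = f(x_1, Y^{x_1,y_1}_t) - f(x_2, Y^{x_2,y_2}_t),\quad Z_0 = y_1 - y_2.
\end{equation*}
In particular $t\mapsto |Z_t|^2$ is absolutely continuous and can be differentiated in the classical sense, avoiding any delicate use of It\^o's formula for jump processes.

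Next, I would split the right-hand side into a ``$y$-part'' and an ``$x$-part'':
\begin{equation*}
\tfrac{d}{dt}|Z_t|^2 = 2\langle Z_t, f(x_1,Y^{x_1,y_1}_t)-f(x_1,Y^{x_2,y_2}_t)\rangle + 2\langle Z_t, f(x_1,Y^{x_2,y_2}_t)-f(x_2,Y^{x_2,y_2}_t)\rangle.
\end{equation*}
The first inner product is bounded above by $-2\beta|Z_t|^2$ thanks to the one-sided dissipativity assumption \eref{sm}. For the second, the assumption $f\in C^{1,1}_b$ gives a uniform Lipschitz constant $L_x := \|\nabla_x f\|_\infty$ in the $x$ variable, so that this term is at most $2L_x|Z_t|\,|x_1-x_2|$. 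A standard Young-type inequality $2ab \leq \beta a^2 + \beta^{-1} b^2$ then yields
\begin{equation*}
\tfrac{d}{dt}|Z_t|^2 \leq -\beta |Z_t|^2 + \tfrac{L_x^2}{\beta}|x_1-x_2|^2.
\end{equation*}

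Finally, Gronwall's inequality in its integral form gives $|Z_t|^2 \leq e^{-\beta t}|y_1-y_2|^2 + \frac{L_x^2}{\beta^2}|x_1-x_2|^2$, and taking square roots (using $\sqrt{a+b}\leq\sqrt{a}+\sqrt{b}$) produces the claimed bound with $C = L_x/\beta$. There is no real obstacle here; the only thing to check is that the ODE argument is legitimate despite the jumps in $Y^{x,y}_t$ itself — this is fine because the jumps of $L^2$ are common to both processes and cancel in the difference, so $Z_t$ has continuous (in fact $C^1$) paths regardless of the roughness of the driving noise.
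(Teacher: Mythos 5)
Your proof is correct and follows essentially the same route as the paper: subtract the two equations so the common noise cancels, differentiate $|Z_t|^2$, split the drift into a dissipative $y$-part and a Lipschitz $x$-part, apply Young's inequality and a Gronwall/comparison argument, then take square roots. The only addition is your explicit remark that $Z_t$ has $C^1$ paths because the jumps cancel, which the paper leaves implicit.
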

\begin{proof}
Note that
\begin{eqnarray*}
d\left(Y^{x_1,y_1}_t-Y^{x_2,y_2}_t\right)=\!\!\!\!\!\!\!\!&&\left[f(x_1, Y^{x_1,,y_1}_t)-f(x_2, Y^{x_2,y_2}_t)\right]dt,\quad Y^{x_1,y_1}_0-Y^{x_2,y_2}_0=y_1-y_2.
\end{eqnarray*}
Multiplying  both sides by $2(Y^{x_1,y_1}_t-Y^{x_2,y_2}_t)$, we obtain
\begin{eqnarray*}
\frac{d}{dt}\left|Y^{x_1,y_1}_t-Y^{x_2,y_2}_t\right|^2=\!\!\!\!\!\!\!\!&&2\langle f( x_1,Y^{x_1,y_1}_t)-f(x_2, Y^{x_2,y_2}_t), Y^{x_1,y_1}_t-Y^{x_2,y_2}_t\rangle.
\end{eqnarray*}
Then by condition \eref{sm} and Young's inequality, we get
\begin{eqnarray*}
\frac{d}{dt}|Y^{x_1,y_1}_t-Y^{x_2,y_2}_t|^2=\!\!\!\!\!\!\!\!&&2\left\langle f(x_1, Y^{x_1,y_1}_t)-f(x_1,Y^{x_2,y_2}_t), Y^{x_1,y_1}_t-Y^{x_2,y_2}_t\right\rangle\\
&&+2\left\langle f(x_1, Y^{x_2,y_2}_t)-f(x_2,Y^{x_2,y_2}_t), Y^{x_1,y_1}_t-Y^{x_2,y_2}_t\right\rangle\\
\leq\!\!\!\!\!\!\!\!&& -2\beta\left|Y^{x_1,y_1}_t-Y^{x_2,y_2}_t\right|^2+C|x_1-x_2|\left|Y^{x_1,y_1}_t-Y^{x_2,y_2}_t\right|\\
\leq\!\!\!\!\!\!\!\!&& -\beta\left|Y^{x_1,y_1}_t-Y^{x_2,y_2}_t\right|^2+C|x_1-x_2|^2.
\end{eqnarray*}
As a result of the comparison theorem, we have that for any $t\geq 0$,
\begin{eqnarray*}
\left|Y^{x_1,y_1}_t-Y^{x_2,y_2}_t\right|^2\leq e^{-\beta t}|y_1-y_2|^2+C|x_1-x_2|^2.
\end{eqnarray*}
The proof is complete.
\end{proof}

Let $\{P^{x}_t\}_{t\geq 0}$ be the transition semigroup of $Y_{t}^{x,y}$, i.e., for any bounded measurable function $\varphi:\RR^{d_2}\rightarrow \mathbb{R}$,
$$
P^{x}_t\varphi(y):=\EE\varphi(Y_{t}^{x,y}), \quad y\in\RR^{d_2}, t\geq 0.
$$
Then condition \eref{sm} ensures that $P^{x}_t$ has a unique invariant measure $\mu^{x}$ (see e.g. \cite[Theorem 1.1]{W}). Moreover, in view of \eref{FEq0} in the Appendix, we have for any $p\in (0, \alpha)$,  $\sup_{x\in\RR^{d_1}}\int_{\RR^{d_2}}|z|^p\mu^x(dz)<\infty$. The following result will play
an important role below.

\begin{proposition}\label{Ergodicity} Assume that $f(x,\cdot)\in C^1_b$ and condition \eref{sm} holds. Then for any function $g\in C^1_b$, there exists a positive constant $C$ such that for any $t\geq 0$ and $y\in\RR^{d_2}$,
\begin{eqnarray}
\sup_{x\in\RR^{d_1}}\left| P^x_tg(y)-\mu^x(g)\right|\leq\!\!\!\!\!\!\!\!&& C\|g\|_1 e^{-\frac{\beta t}{2}}(1+|y|),\label{ergodicity1}
\end{eqnarray}
where $\|g\|_1:=\sup_{x\neq y\in\RR^{d_2}}\frac{|g(x)-g(y)|}{|x-y|}$.
\end{proposition}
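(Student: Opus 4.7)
The plan is to exploit the invariance of $\mu^x$ to rewrite $\mu^x(g)$ as an average of $P^x_t g$ against the invariant measure, and then control the resulting difference by the exponential contraction established in Lemma \ref{L3.6}. Concretely, for fixed $x\in\RR^{d_1}$, invariance yields $\mu^x(g)=\int_{\RR^{d_2}}P^x_tg(z)\,\mu^x(dz)$, hence
\begin{equation*}
P^x_tg(y)-\mu^x(g)=\int_{\RR^{d_2}}\EE\bigl[g(Y^{x,y}_t)-g(Y^{x,z}_t)\bigr]\,\mu^x(dz).
\end{equation*}
Since $g\in C^1_b$ is Lipschitz with Lipschitz constant $\|g\|_1$, the integrand is bounded by $\|g\|_1\,\EE|Y^{x,y}_t-Y^{x,z}_t|$.

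Next I would apply Lemma \ref{L3.6} with $x_1=x_2=x$, which removes the $|x_1-x_2|$ term and gives the pure contraction $|Y^{x,y}_t-Y^{x,z}_t|\leq e^{-\beta t/2}|y-z|$, uniformly in $x$. Substituting back,
\begin{equation*}
\bigl|P^x_tg(y)-\mu^x(g)\bigr|\leq \|g\|_1\,e^{-\beta t/2}\int_{\RR^{d_2}}|y-z|\,\mu^x(dz)\leq \|g\|_1\,e^{-\beta t/2}\Bigl(|y|+\int_{\RR^{d_2}}|z|\,\mu^x(dz)\Bigr).
\end{equation*}
To close the estimate uniformly in $x$, I would invoke the moment bound for $\mu^x$ recalled just before the statement: since $\alpha>1$ we may take $p=1\in(0,\alpha)$, so $\sup_{x\in\RR^{d_1}}\int_{\RR^{d_2}}|z|\,\mu^x(dz)<\infty$. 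This produces the constant $C$ and yields the claimed bound $C\|g\|_1 e^{-\beta t/2}(1+|y|)$, with $C$ independent of $x$.

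There is no real obstacle in this argument; the two non-trivial ingredients (the synchronous coupling contraction and the uniform first moment of the family $\{\mu^x\}_x$) are available by Lemma \ref{L3.6} and the a priori estimates quoted from the Appendix, respectively. The only point that deserves a brief check is that the integrals above are finite so that Fubini applies, which follows from the same $L^1$-moment bound on $\mu^x$ together with the linear-in-$|y|$ bound on $\EE|Y^{x,y}_t|$ from Lemma \ref{L3.2} in the Appendix. Thus the full proof amounts to writing out the three displayed inequalities above.
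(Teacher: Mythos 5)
Your proposal is correct and follows essentially the same route as the paper's proof: rewrite $\mu^x(g)$ via invariance, bound the difference by $\|g\|_1\,\EE|Y^{x,y}_t-Y^{x,z}_t|$, apply the contraction from Lemma \ref{L3.6} with $x_1=x_2$, and conclude with the uniform first moment of $\mu^x$. No discrepancies to report.
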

\begin{proof}
By the definition of invariant measure and Lemma \ref{L3.6},  for any $t\geq 0$ we have
\begin{eqnarray*}
\left| \EE g(Y^{x,y}_t)-\mu^x(g)\right|=\!\!\!\!\!\!\!\!&&\left| \EE g(Y^{x,y}_t)-\int_{\RR^{d_2}}g(z)\mu^{x}(dz)\right|\\
\leq\!\!\!\!\!\!\!\!&& \left|\int_{\RR^{d_2}}\left[\EE g(Y^{x,y}_t)-\EE g(Y^{x,z}_t)\right]\mu^{x}(dz)\right|\\
\leq\!\!\!\!\!\!\!\!&& \|g\|_1\int_{\RR^{d_2}} \EE\left| Y^{x,y}_t-Y^{x,z}_t\right|\mu^{x}(dz)\\
\leq\!\!\!\!\!\!\!\!&& \|g\|_1 e^{-\frac{\beta t}{2}}\int_{\RR^{d_2}}|y-z|\mu^{x}(dz)\\
\leq\!\!\!\!\!\!\!\!&& \leq C\|g\|_1 e^{-\frac{\beta t}{2}}(1+|y|).
\end{eqnarray*}
The proof is complete.
\end{proof}

\subsection{Poisson equaiton}

Let $\mathscr{L}_{2}(x,y)$ be the generator of $Y_t^{x,y}$, i.e.,
\begin{eqnarray}\label{L_2}
\mathscr{L}_{2}(x,y)\Phi(x,y):=\!\!\!\!\!\!\!\!&&-(-\Delta_y)^{\alpha/2}\Phi(x,y)+\langle f(x,y), \nabla_y \Phi(x,y)\rangle.
\end{eqnarray}
 The following is the main result of this subsection.

\begin{proposition}\label{P3.6}
Suppose that the assumptions in Theorem \ref{main result 1} hold.
Define
\begin{eqnarray}
\Phi(x,y):=\int^{\infty}_{0}\left[\EE b(x,Y^{x,y}_t)-\bar{b}(x)\right]dt.\label{SPE}
\end{eqnarray}
Then $\Phi(x,y)$ is a solution of the Poisson equation \eref{PE1}. Moreover, we have $\Phi(\cdot,y)\in C^{1}(\RR^{d_1},\RR^{d_1})$, $\Phi(x,\cdot)\in C^{2}(\RR^{d_2}, \RR^{d_1})$, and there exists $C>0$ such that
\begin{eqnarray}
\sup_{x\in\RR^{d_1}}|\Phi(x,y)|\leq C(1+|y|),\quad\sup_{x\in\RR^{d_1},y\in\RR^{d_2}}\|\nabla_y \Phi(x,y)\|\leq C,\label{E1}
\end{eqnarray}
and for any $\theta\in(0,1]$, there exists $C_{\theta}>0$ such that for any $x_1,x_2\in\RR^{d_1}, y\in\RR^{d_2}$,
\begin{eqnarray}
\sup_{x\in\RR^{d_1}}\|\nabla_x \Phi(x,y)\|\leq C_{\theta}(1+|y|^{\theta}), \label{E2}
\end{eqnarray}
\begin{eqnarray}
\| \nabla_x\Phi(x_1, y)- \nabla_x\Phi(x_2, y)\|\leq C|x_1-x_2|^{\gamma}(1+|x_1-x_2|^{1-\gamma})(1+|y|),\label{E3}
\end{eqnarray}
where $\gamma\in (\alpha-1, 1)$ is the constant in Theorem \ref{main result 1}.
\end{proposition}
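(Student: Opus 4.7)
The plan is to establish the four claims in order: (i) well-definedness of \eqref{SPE} and the first bound in \eqref{E1}; (ii) the Poisson equation \eqref{PE1}; (iii) the $\nabla_y$ and $\nabla_y^2$ estimates, including the second bound in \eqref{E1}; and (iv) the $x$-derivative estimates \eqref{E2} and \eqref{E3}.

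For (i) and (ii), applying Proposition~\ref{Ergodicity} to $z\mapsto b(x,z)$, whose Lipschitz seminorm in $z$ is bounded uniformly in $x$ by $\|b\|_{C^{0,1}_b}$, gives
\begin{equation*}
|\EE b(x,Y_t^{x,y})-\bar b(x)|=|P_t^x b(x,\cdot)(y)-\mu^x(b(x,\cdot))|\le C e^{-\beta t/2}(1+|y|),
\end{equation*}
so the integral in \eqref{SPE} converges absolutely and $|\Phi(x,y)|\le C(1+|y|)$. Setting $u(t,x,y):=\EE b(x,Y_t^{x,y})$, the Kolmogorov backward equation gives $\partial_t u=\mathscr{L}_2(x,y)u$, and integration over $t\in(0,\infty)$ with $u(0)=b(x,y)$, $u(\infty)=\bar b(x)$ yields $-\mathscr{L}_2\Phi=b-\bar b$. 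The interchange of $\mathscr{L}_2$ with the integral will be justified a posteriori by the derivative bounds of step (iii).

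For (iii), since $L^2$ is additive in \eqref{FEQ}, the Jacobian $\nabla_y Y_t^{x,y}$ is pathwise smooth and satisfies
\begin{equation*}
\frac{d}{dt}\nabla_y Y_t^{x,y}=\partial_y f(x,Y_t^{x,y})\,\nabla_y Y_t^{x,y},\qquad \nabla_y Y_0^{x,y}=I.
\end{equation*}
Condition \eqref{sm} forces the symmetric part of $\partial_y f$ to be bounded above by $-\beta I$, hence $\|\nabla_y Y_t^{x,y}\|\le e^{-\beta t}$. A Duhamel argument for the Hessian flow, whose source $\partial_y^2 f(x,Y)(\nabla_y Y)^{\otimes 2}$ is of order $e^{-2\beta t}$, gives $\|\nabla_y^2 Y_t^{x,y}\|\le Ce^{-\beta t}$. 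Using $b\in C^{0,2}_b$, differentiating through the expectation yields $|\nabla_y^k\EE b(x,Y_t^{x,y})|\le Ce^{-\beta t}$ for $k=1,2$, and integration in $t$ gives the second bound in \eqref{E1} and $C^2$-regularity in $y$.

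The real difficulty is (iv), which I expect to be the bulk of the work. The obstruction is that $\nabla_x Y_t^{x,y}$, satisfying the inhomogeneous equation $\partial_t\nabla_x Y=\partial_x f(x,Y)+\partial_y f(x,Y)\nabla_x Y$ with zero initial data, is only uniformly \emph{bounded} in $t$ (by dissipativity plus Young), not decaying, so a naive differentiation under the integral in \eqref{SPE} is not time-integrable. The cancellation needed must be extracted from the subtraction of $\nabla_x\bar b(x)$, which in turn carries the hidden $x$-dependence of $\mu^x$. My plan is to avoid differentiating $\mu^x$ altogether by using the invariance identity $\mu^x(b(x,\cdot))=\mu^x(P_t^x b(x,\cdot))$ to rewrite
\begin{equation*}
\EE b(x,Y_t^{x,y})-\bar b(x)=\int_{\RR^{d_2}}\EE\bigl[b(x,Y_t^{x,y})-b(x,Y_t^{x,z})\bigr]\,\mu^x(dz),
\end{equation*}
and then differentiate \emph{inside} the integrand, where the two coupled flows $Y_t^{x,y},Y_t^{x,z}$ produce exponentially small differences. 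Lemma~\ref{L3.6} gives $|Y_t^{x,y}-Y_t^{x,z}|\le e^{-\beta t/2}|y-z|$, and a parallel Duhamel estimate for $\nabla_x Y_t^{x,y}-\nabla_x Y_t^{x,z}$ yields $\|\nabla_x Y_t^{x,y}-\nabla_x Y_t^{x,z}\|\le C\sqrt{t}\,e^{-\beta t/2}|y-z|$. Combined with $b\in C^{1+\gamma,2+\delta}_b$, the moment bounds $\int|z|^\theta\mu^x(dz)\le C$ for $\theta<\alpha$ (from \eqref{FEq0}), and the elementary interpolation $|y-z|^\theta\le|y|^\theta+|z|^\theta$ used to match the $|y|^\theta$ growth in \eqref{E2}, we obtain an integrable bound on the $x$-derivative of the integrand and hence \eqref{E2}. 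The Hölder estimate \eqref{E3} will follow from the same representation, using the $\gamma$-Hölder continuity of $\partial_x b$ and $\partial_x f$ in $x$ and applying Lemma~\ref{L3.6} to flows started from different values of $x$, together with an interpolation between the trivial bound $\|\nabla_x Y_t^{x_1,y}-\nabla_x Y_t^{x_2,y}\|\le C$ and the Hölder estimate obtained via the coupling.
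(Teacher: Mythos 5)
Your steps (i)--(iii) are sound and essentially coincide with the paper's treatment: the zeroth-order bound comes from Proposition \ref{Ergodicity}, the equation from It\^o's formula, and the $y$-regularity from the exponentially decaying Jacobian flow \eref{partial yY}. The gap is in step (iv), and it sits exactly where you (correctly) locate the difficulty. The identity
\[
\EE b(x,Y^{x,y}_t)-\bar b(x)=\int_{\RR^{d_2}}\EE\bigl[b(x,Y^{x,y}_t)-b(x,Y^{x,z}_t)\bigr]\,\mu^x(dz)
\]
does not remove the $x$-dependence of the invariant measure: the outer integral is still against $\mu^x(dz)$, so differentiating ``inside the integrand'' silently discards the term coming from the variation of $\mu^x$ in $x$. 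That term is not negligible. The integrand is Lipschitz in $z$ with constant $Ce^{-\beta t/2}$ (Lemma \ref{L3.6}), and $x\mapsto\mu^x$ is Lipschitz in Wasserstein-$1$ distance by the same lemma, so in the difference quotient the contribution of $\tfrac{1}{h}[\mu^{x+he}-\mu^{x}]$ applied to the integrand is of order $e^{-\beta t/2}$ --- the same size as the term you keep --- and does not vanish as $h\to0$. Controlling it requires the differentiability of $x\mapsto\mu^x$ with quantitative bounds, which is a statement of essentially the same depth as \eref{E2} itself: indeed $\nabla_x\bar b(x)=\mu^x(\nabla_x b(x,\cdot))+\partial_x\mu^x(b(x,\cdot))$, so you would be assuming a piece of what you are proving.

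The paper avoids this entirely by a finite-horizon device. It sets $\hat b(x,y,t):=\EE b(x,Y^{x,y}_t)$ and $\tilde b_{t_0}(x,y,t):=\hat b(x,y,t)-\hat b(x,y,t+t_0)$, uses the Markov property to write $\hat b(x,y,t+t_0)=\EE\,\hat b(x,Y^{x,y}_{t_0},t)$, and then only ever differentiates finite-time expectations; no derivative of $\mu^x$ appears. The decay in $t$ is extracted from the H\"older continuity of $y\mapsto\nabla_x\hat b(x,y,t)$ with an exponentially decaying constant (estimate \eref{S2}), applied to the pair $(y,Y^{x,y}_{t_0})$, and the limit $t_0\to\infty$ (via Proposition \ref{Ergodicity}) recovers $\nabla_x[\EE b(x,Y^{x,y}_t)-\bar b(x)]$. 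If you want to salvage your route you must either prove differentiability of $\mu^x$ separately or switch to this $t_0$-difference argument; the coupled-flow estimates you already list (Lemma \ref{L3.6} and the Duhamel bound for $\nabla_xY^{x,y}_t-\nabla_xY^{x,z}_t$) are precisely the inputs that argument needs, so the repair is close at hand but the step as written does not go through.
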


\begin{proof}
The assertion that \eref{SPE} is a solution of the Poisson equation \eref{PE1} follows by It\^o's formula. Moreover, by straightforward computation, we can see that $\Phi(\cdot,y)\in C^{1}(\RR^{d_1},\RR^{d_1})$ and $\Phi(x,\cdot)\in C^{2}(\RR^{d_2}, \RR^{d_1})$. Below, we mainly focus on the regularity estimates (\ref{E1})-(\ref{E3}).
By Proposition \ref{Ergodicity}, we have
\begin{eqnarray*}
|\Phi(x,y)|\leq\!\!\!\!\!\!\!\!&&\int^{\infty}_{0}| \EE[b(x,Y^{x,y}_t)]-\bar{b}(x)|dt\\
\leq\!\!\!\!\!\!\!\!&& C(1+|y|)\int^{\infty}_{0}e^{-\frac{\beta t}{2}}dt\leq C_{\beta}(1+|y|),
\end{eqnarray*}
which implies the first estimate in \eref{E1}. Note that
$$\nabla_y \Phi(x,y)=\int^{\infty}_0 \EE[\nabla_y b(x,Y^{x,y}_t)\cdot\nabla_y Y^{x,y}_t]dt,$$
where $\nabla_y Y^{x,y}_t$ satisfies
 \begin{equation}\left\{\begin{array}{l}\label{partial y}
\displaystyle
d\nabla_y Y^{x,y}_t=\nabla_y f(x,Y^{x,y}_{t})\cdot\nabla_y Y^{x,y}_tdt, \vspace{2mm}\\
\nabla_y Y^{x,y}_t=I.\\
\end{array}\right.
\end{equation}
Then by Lemma \ref{L3.6}, we have
\begin{eqnarray}
\sup_{x\in\RR^{d_1},y\in\RR^{d_2}}\|\nabla_y Y^{x,y}_t\|\leq Ce^{-\frac{\beta t}{2}}.\label{partial yY}
\end{eqnarray}
Thus, by the boundedness of $\|\nabla_y b(x,y)\|$, there exists $C>0$ such that
$$\sup_{x\in\RR^{d_1},y\in\RR^{d_2}}\|\nabla_y \Phi(x,y)\|\leq C,$$
which implies the second estimate in \eref{E1}.

Now, we define
\begin{eqnarray*}
\tilde b_{t_0}(x, y, t):=\hat b(x,y, t)-\hat b(x, y, t+t_0),
\end{eqnarray*}
where $\hat b(x, y, t):=\EE b(x, Y^{x, y}_t)$.
We claim that for any $\theta\in (0,1]$, there exist $C_{\theta}>0$ and $\eta>0$ such that for any $t_0>0$, $ t> 0$, $x\in\RR^{d_1}$ and $y\in\RR^{d_2}$ , we have
\begin{eqnarray}
\left\|\nabla_x\tilde b_{t_0}(x,y,t)\right\|\leq C_{\theta} e^{-\eta t}(1+|y|^{\theta}),\label{E21}
\end{eqnarray}
\begin{eqnarray}
\left\|\nabla_x \tilde b_{t_0}(x_1,y,t)\!-\!\nabla_x \tilde b_{t_0}(x_2,y,t)\right\|\leq C e^{-\eta t}|x_1\!-\!x_2|^{\gamma}(1+|x_1-x_2|^{1-\gamma})(1+|y|)\label{E22}.
\end{eqnarray}
On the other hand, Proposition \ref{Ergodicity} implies that
$$
\lim_{t_0\to +\infty}\tilde b_{t_0}(x, y, t)=\EE[b(x,Y^{x,y}_t)]-\bar{b}(x)
$$
and thus
$$
\lim_{t_0\to +\infty} \nabla_x\tilde b_{t_0}(x, y, t)=\nabla_x\left[\EE b(x,Y^{x,y}_t)-\bar{b}(x)\right ].
$$
Consequently, we have estimates \eref{E2} and \eref{E3} hold. Below, we provide the proof of estimates \eref{E21}  and \eref{E22} separately.
\end{proof}

\begin{proof}[Proof of \eref{E21}]
By the Markov property, we write
\begin{eqnarray*}
\tilde b_{t_0}(x,y, t)=\!\!\!\!\!\!\!\!&& \hat b(x, y, t)-\EE b(x, Y^{x,y}_{t+t_0})\nonumber\\
=\!\!\!\!\!\!\!\!&& \hat b(x,y, t)-\EE \{\EE[b(x,Y^{x,y}_{t+t_0})|\mathscr{F}_{t_0}]\}=\hat b(x, y, t)-\EE \hat b(x, Y^{x,y}_{t_0},t).
\end{eqnarray*}
Thus
\begin{align}\label{5.8}
\nabla_x\tilde b_{t_0}(x,y, t)= \nabla_x \hat b(x, y, t)\!-\!\EE \nabla_x\hat b(x, Y^{x,y}_{t_0},t)-\!\! \EE \left[\nabla_y\hat b(x,Y^{x,y}_{t_0},t)\cdot \nabla_x Y^{x,y}_{t_0} \right],
\end{align}
where $\nabla_x Y^{x,y}_t$ satisfies
 \begin{equation*}
d\nabla_x Y^{x,y}_t=\nabla_x f(x,Y^{x,y}_{t})dt+\nabla_y f(x,Y^{x,y}_{t})\cdot\nabla_x Y^{x,y}_tdt,\quad
\nabla_x Y^{x,y}_0=0.
\end{equation*}
Obviously, Lemma \ref{L3.6} implies
\begin{eqnarray}
\sup_{t\geq 0, x\in\RR^{d_1}, y\in\RR^{d_2},} \|\nabla_x Y^{x,y}_t\|\leq C. \label{S0}
\end{eqnarray}
Note that $\nabla_y\hat b(x, y, t)=\EE\left[\nabla_y b(x, Y^{x,y}_t)\cdot \nabla_y Y^{x,y}_t\right]$, combining this with \eref{partial yY} and the boundedness of $\|\nabla_y b(x,y)\|$, we get
\begin{eqnarray}
\sup_{x\in\RR^{d_1},y\in\RR^{d_2}}\|\nabla_y\hat b(x, y, t)\|\leq\!\!\!\!\!\!\!\!&&C e^{-\frac{\beta t}{2}}.\label{S1}
\end{eqnarray}
Next, if we can show that for any $\theta\in (0,1]$, there exists $C_{\theta}>0$ such that for any $t \geq 0$, $x\in\RR^{d_1}$ and $y_1,y_2\in\RR^{d_2}$,
\begin{eqnarray}
\left\|\nabla_x\hat b(x,y_1, t)-\nabla_x\hat b(x, y_2, t)\right\|\leq C_{\theta} e^{-\frac{\beta\theta t}{2}}|y_1-y_2|^{\theta}.\label{S2}
\end{eqnarray}
Then, by \eref{5.8}, estimates \eref{S0}-\eref{S2} and \eref{FEq0}, we have
\begin{eqnarray*}
\left\|\nabla_x\tilde b_{t_0}( x, y, t)\right\|\leq  C_{\theta}e^{-\frac{\beta \theta t}{2}} \EE|y-Y^{x,y}_{t_0}|^{\theta}+Ce^{-\frac{\beta t}{2}} \leq C_{\theta} e^{-\frac{\beta\theta t}{2}}(1+|y|^{\theta}),
\end{eqnarray*}
which proves \eref{E21}.

Now, we proceed to prove \eref{S2}. Indeed,
\begin{align*}
&\left\|\nabla_x\hat b(x, y_1, t)-\nabla_x\hat b(x, y_2, t)\right\|=\left\| \nabla_x \EE b(x, Y^{x, y_1}_t)-\nabla_x\EE b(x,Y^{x,y_2}_t)\right\|\\
&\leq \EE \left\|\nabla_x b(x, Y^{x,y_1}_t)-\nabla_x b(x, Y^{x,y_2}_t)\right\|\\
&\quad+\EE \left\|\nabla_y b(x, Y^{x,y_1}_t)\cdot\nabla_x Y^{x,y_1}_t-\nabla_y b(x,  Y^{x,y_2}_t)\cdot\nabla_x Y^{x,y_2}_t\right\|\\
&\leq \EE \left\|\nabla_x b(x,Y^{x,y_1}_t)- \nabla_x b(x,  Y^{x,y_2}_t)\right\|\\
&\quad+\EE \left\|\nabla_y b(x, Y^{x,y_1}_t)\cdot\nabla_x Y^{x,y_1}_t-\nabla_y b(x, Y^{x,y_2}_t)\cdot\nabla_x Y^{x,y_1}_t\right\|\\
&\quad+\EE \left\|\nabla_y b(x, Y^{x,y_2}_t)\cdot\nabla_x Y^{x,y_1}_t-\nabla_y b(x, Y^{x,y_2}_t)\cdot\nabla_x Y^{x,y_2}_t\right\|=:\sum^{3}_{i=1}S_i.
\end{align*}
By
the boundedness of $\|\nabla_x b(x,y)\|$, $\|\nabla_y\nabla_x b(x,y)\|$ and $\|\partial^2_y b(x,y)\|$, we have for any $\theta\in (0,1]$,
\begin{eqnarray}
S_1\leq C_{\theta}\EE|Y^{x,y_1}_t-Y^{x,y_2}_t|^{\theta}\leq C_{\theta} e^{-\frac{\beta\theta t}{2}}|y_1-y_2|^{\theta},\label{S_1}
\end{eqnarray}
and by the boundedness of $\|\nabla^2_y b(x,y)\|$ and Lemma \ref{L3.6},
\begin{eqnarray}
S_2\leq C_{\theta} \EE|Y^{x,y_1}_t-Y^{x,y_2}_t|^{\theta}\leq C_{\theta} e^{-\frac{\beta \theta t}{2}}|y_1-y_2|^{\theta}.\label{S_2}
\end{eqnarray}
For the term $S_3$, by  the assumption $f\in C^{1+\gamma,2}_b$, condition \eref{sm} and a straightforward computation, we have that for any $x_1,x_2\in\RR^{d_1}$, $y_1,y_2\in\RR^{d_2}$,
\begin{eqnarray}
\left\|\nabla_x Y^{x_1,y_1}_t-\nabla_x Y^{x_2,y_2}_t\right\|^2\leq\!\!\!\!\!\!\!\!&& C(|x_1-x_2|^{2\gamma}+|x_1-x_2|^{2})+Ce^{-\frac{\beta t}{2}}|y_1-y_2|^2.\label{partial xy Y}
\end{eqnarray}
Then by \eref{S0} and \eref{partial xy Y}, it is easy to see
\begin{eqnarray}
S_3\leq C_{\theta}\EE\|\nabla_x Y^{x,y_1}_t-\nabla_x Y^{x,y_2}_t\|^{\theta}
\leq C_{\theta} e^{-\frac{\beta \theta t}{4}}|y_1-y_2|^{\theta}.\label{S_3}
\end{eqnarray}
Now \eref{S2} follows directly from \eref{S_1}-\eref{S_3}.
\end{proof}

\begin{proof}[Proof of \eref{E22}]
Recall that
\begin{eqnarray*}
\nabla_x\tilde b_{t_0}(x,y, t)=\!\!\!\!\!\!\!\!&& \nabla_x \hat b(x, y, t)-\EE \nabla_x\hat b(x, Y^{x,y}_{t_0},t)- \EE \left[\nabla_y\hat b(x,Y^{x,y}_{t_0},t)\cdot\nabla_x Y^{x,y}_{t_0} \right].
\end{eqnarray*}
Then we get for any $x_1,x_2\in\RR^{d_1}$, $y\in\RR^{d_2}$ and $t,t_0>0$,
\begin{eqnarray*}
&&\left\|\nabla_x\tilde b_{t_0}(x_1, y, t)-\nabla_x\tilde b_{t_0}(x_2, y, t)\right\|\\
=\!\!\!\!\!\!\!\!&& \left\|\nabla_x \hat b(x_1, y, t)-\EE \nabla_x\hat b(x_1, Y^{x_1,y}_{t_0},t)-\left[\nabla_x \hat b(x_2, y, t)-\EE \nabla_x\hat b(x_2, Y^{x_2,y}_{t_0},t)\right]\right.\nonumber\\
&&\left.-\EE \left[\nabla_y\hat b(x_1,Y^{x_1,y}_{t_0},t)\cdot\nabla_x Y^{x_1,y}_{t_0} \right]+\EE \left[\nabla_y\hat b(x_2,Y^{x_2,y}_{t_0},t)\cdot\nabla_x Y^{x_2,y}_{t_0} \right]\right\|\\
\leq\!\!\!\!\!\!\!\!&& \left\|\nabla_x \hat b(x_1, y, t)-\EE \nabla_x\hat b(x_1, Y^{x_1,y}_{t_0},t)-\left[\nabla_x \hat b(x_2, y, t)-\EE \nabla_x\hat b(x_2, Y^{x_1,y}_{t_0},t)\right]\right\|\nonumber\\
&&+\left\|\EE \nabla_x\hat b(x_2, Y^{x_2,y}_{t_0},t)-\EE \nabla_x\hat b(x_2, Y^{x_1,y}_{t_0},t)\right\|\\
&&+\left\|\EE \left[ \nabla_y\hat b(x_1,Y^{x_1,y}_{t_0},t)\cdot\nabla_x Y^{x_1,y}_{t_0}\right]-\EE \left[ \nabla_y\hat b(x_2,Y^{x_2,y}_{t_0},t) \cdot \nabla_x Y^{x_2,y}_{t_0} \right]\right\|\\
:=&&\sum^3_{i=1} Q_i.
\end{eqnarray*}
(i) For the term $Q_1$, recall that
\begin{eqnarray*}
\nabla_x\hat b(x,y, t)=\!\!\!\!\!\!\!\!&& \EE \left[\nabla_x b(x, Y^{x, y}_{t})\right]+\EE\left[ \nabla_yb(x,Y^{x,y}_{t})\cdot\nabla_xY^{x,y}_t \right],
\end{eqnarray*}
which implies
\begin{eqnarray*}
&&\left\|\nabla_x\hat b(x_1,y_1, t)-\nabla_x\hat b(x_1,y_2, t)-[\nabla_x\hat b(x_2,y_1, t)-\nabla_x\hat b(x_2,y_2, t)]\right\|\\
=\!\!\!\!\!\!\!\!&&\left\|\EE \left[\nabla_xb(x_1,Y^{x_1,y_1}_t)\right]+\EE \left[\nabla_yb(x_1,Y^{x_1,y_1}_t)\cdot \nabla_x Y^{x_1,y_1}_t\right]\right.\nonumber\\
&&-\EE \left[\nabla_xb(x_1,Y^{x_1,y_2}_t)\right]-\EE \left[\nabla_yb(x_1,Y^{x_1,y_2}_t)\cdot \nabla_x Y^{x_1,y_2}_t\right]\\
&&-\EE \left[\nabla_xb(x_2,Y^{x_2,y_1}_t)\right]-\EE \left[\nabla_yb(x_2,Y^{x_2,y_1}_t)\cdot \nabla_x Y^{x_2,y_1}_t\right]\\
&&+\left.\EE \left[\nabla_xb(x_2,Y^{x_2,y_2}_t)\right]+\EE \left[\nabla_yb(x_2,Y^{x_2,y_2}_t)\cdot \nabla_x Y^{x_2,y_2}_t\right]\right\|
\leq\sum^{3}_{i=1}Q_{1i},
\end{eqnarray*}
where
$$Q_{11}:=\left\|\EE \left[\nabla_xb(x_1,Y^{x_1,y_1}_t)-\nabla_xb(x_1,Y^{x_1,y_2}_t)-(\nabla_xb(x_2,Y^{x_1,y_1}_t)-\nabla_xb(x_2,Y^{x_1,y_2}_t))\right]\right\|,$$
$$Q_{12}:=\|\EE \left[\nabla_xb(x_2,Y^{x_1,y_1}_t)-\nabla_xb(x_2,Y^{x_2,y_1}_t)-(\nabla_xb(x_2,Y^{x_1,y_2}_t)-\nabla_xb(x_2,Y^{x_2,y_2}_t))\right],$$
\begin{eqnarray*}
&&Q_{13}:=\left\|\EE \left[\nabla_yb(x_1,Y^{x_1,y_1}_t)\cdot \nabla_x Y^{x_1,y_1}_t-\nabla_yb(x_2,Y^{x_2,y_1}_t)\cdot \nabla_x Y^{x_2,y_1}_t\right]\right.\\
&&\quad\quad\quad\left.-\EE\left[\nabla_yb(x_1,Y^{x_1,y_2}_t)\cdot \nabla_x Y^{x_1,y_2}_t-\nabla_yb(x_2,Y^{x_2,y_2}_t)\cdot \nabla_x Y^{x_2,y_2}_t\right]\right\|.
\end{eqnarray*}
By the assumption that $\nabla_x\nabla_yb(x,y)$ is H\"{o}lder continuous with respect to $x$
with index $\gamma$ and Lemma \ref{L3.6}, we get that
\begin{eqnarray}
Q_{11}\leq\!\!\!\!\!\!\!\!&&\EE\left\|\int^1_0 \left[ \nabla_x\nabla_yb(x_1, \xi Y^{x_1,y_1}_t+(1-\xi)Y^{x_1,y_2}_t)\right]d\xi\cdot (Y^{x_1,y_1}_t-Y^{x_1,y_2}_t)\right.\nonumber\\
&&\left.-\int^1_0 \left[\nabla_x\nabla_yb(x_2, \xi Y^{x_1,y_1}_t+(1-\xi)Y^{x_1,y_2}_t)\right]d\xi\cdot ( Y^{x_1,y_1}_t-Y^{x_1,y_2}_t)\right\|\nonumber\\
\leq\!\!\!\!\!\!\!\!&&  C|x_1-x_2|^{\gamma}\EE |Y^{x_1,y_1}_t-Y^{x_1,y_2}_t|
\leq  C|x_1-x_2|^{\gamma}|y_1-y_2|e^{-\frac{\beta t}{2}}.\label{Q11}
\end{eqnarray}
By the boundedness of $\|\nabla_x\partial^2_{y}b(x,y)\|$ and $\|\nabla_x\nabla_yb(x,y)\|$, we obtain
\begin{eqnarray}
Q_{12}\leq\!\!\!\!\!\!\!\!&&\EE\left\|\int^1_0\left[\nabla_x\nabla_yb(x_2,\xi Y^{x_1,y_1}_t+(1-\xi)Y^{x_2,y_1}_t)\right]d\xi \cdot (Y^{x_1,y_1}_t-Y^{x_2,y_1}_t)\right.\nonumber\\
\!\!\!\!\!\!\!\!&&\quad\left.-\int^1_0\left[\nabla_x\nabla_yb(x_2,\xi Y^{x_1,y_2}_t+(1-\xi)Y^{x_2,y_2}_t)\right]d \xi \cdot (Y^{x_1,y_2}_t-Y^{x_2,y_2}_t)\right\|\nonumber\\
\leq\!\!\!\!\!\!\!\!&&\EE \left[\int^1_0\left|\xi(Y^{x_1,y_1}_t-Y^{x_1,y_2}_t)+(1-\xi)(Y^{x_2,y_1}_t-Y^{x_2,y_2}_t)\right|d\xi |Y^{x_1,y_1}_t-Y^{x_2,y_1}_t|\right]\nonumber\\
\!\!\!\!\!\!\!\!&&\quad+\EE|Y^{x_1,y_1}_t-Y^{x_2,y_1}_t-Y^{x_1,y_2}_t+Y^{x_2,y_2}_t|\nonumber\\
\leq\!\!\!\!\!\!\!\!&&\EE\left[\left(|Y^{x_1,y_1}_t-Y^{x_1,y_2}_t|+|Y^{x_2,y_1}_t-Y^{x_2,y_2}_t|\right)|Y^{x_1,y_1}_t-Y^{x_2,y_1}_t|\right]\nonumber\\
\!\!\!\!\!\!\!\!&&+\EE\left|\int^1_0 \nabla_xY^{\xi x_1+(1-\xi)x_2,y_1}_t d\xi\cdot (x_1-x_2)-\int^1_0 \nabla_xY^{\xi x_1+(1-\xi)x_2,y_2}_t d\xi\cdot (x_1-x_2)\right|\nonumber\\
\leq\!\!\!\!\!\!\!\!&&C|x_1-x_2||y_1-y_2|e^{-\frac{\beta t}{2}},\label{Q12}
\end{eqnarray}
where the last inequality is due to Lemma \ref{L3.6} and estimate \eref{partial xy Y}.
Note that
\begin{align*}
Q_{13}&\leq\EE\big\|\left[\nabla_yb(x_1,Y^{x_1,y_1}_t)-\nabla_yb(x_2,Y^{x_1,y_1}_t)\right]\cdot \nabla_x Y^{x_1,y_1}_t\\
&\qquad-\left[\nabla_yb(x_1,Y^{x_1,y_2}_t)-\nabla_yb(x_2,Y^{x_1,y_2}_t)\right]\cdot \nabla_x Y^{x_1,y_2}_t\big\|\\
&\quad+\EE\big\|\left[\nabla_yb(x_2,Y^{x_1,y_1}_t)-\nabla_yb(x_2,Y^{x_2,y_1}_t)\right]\cdot \nabla_x Y^{x_1,y_1}_t\\
&\qquad-\left[\nabla_yb(x_2,Y^{x_1,y_2}_t)-\nabla_yb(x_2,Y^{x_2,y_2}_t)\right]\cdot \nabla_x Y^{x_1,y_2}_t\big\|\\
&\quad+\EE \big\|\nabla_yb(x_2,Y^{x_2,y_1}_t)\cdot (\nabla_x Y^{x_1,y_1}_t-\nabla_x Y^{x_2,y_1}_t)\\
&\qquad-\nabla_yb(x_2,Y^{x_2,y_2}_t)\cdot (\nabla_x Y^{x_1,y_2}_t-\nabla_x Y^{x_2,y_2}_t)\big\|=:Q_{131}+Q_{132}+Q_{133}.
\end{align*}
Since $\|\nabla_x\nabla_yb(x,y)\|$, $\|\nabla^2_{y}b(x,y)\|$, $\|\nabla_x\nabla^2_{y}b(x,y)\|$ are bounded and $\nabla^2_{y}b(x,y)$ is H\"{o}lder continuous with respect to $y$ with index $\delta$, we have
\begin{eqnarray}
Q_{131}\leq \!\!\!\!\!\!\!\!&& \EE\left\| \int^1_0\nabla_x\nabla_y b(\xi x_1+(1-\xi)x_2,Y^{x_1,y_1}_t)d\xi\cdot \left(x_1-x_2,\nabla_x Y^{x_1,y_1}_t\right)\right.\nonumber\\
&&\quad\left.-\int^1_0\nabla_x\nabla_y b(\xi x_1+(1-\xi)x_2,Y^{x_1,y_2}_t)d\xi\cdot \left(x_1-x_2, \nabla_x Y^{x_1,y_2}_t\right)\right\|\nonumber\\
\leq \!\!\!\!\!\!\!\!&&C|x_1\!-\!x_2|\EE\left(|Y^{x_1,y_1}_t\!-\!Y^{x_1,y_2}_t|\|\nabla_x Y^{x_1,y_1}_t\|\right)\!+\!C|x_1\!-\!x_2|\EE\|\nabla_x Y^{x_1,y_1}_t\!-\!\nabla_x Y^{x_1,y_2}_t\|\nonumber\\
\leq \!\!\!\!\!\!\!\!&&Ce^{-\frac{\beta t}{2}}|x_1-x_2||y_1-y_2|\label{Q131}
\end{eqnarray}
and
\begin{eqnarray}
Q_{132}\leq\!\!\!\!\!\!\!\!&&\EE \left\|\int^1_0\nabla^2_{y}b(x_2,\xi Y^{x_1,y_1}_t+(1-\xi)Y^{x_2,y_1}_t)d\xi\cdot\left(Y^{x_1,y_1}_t-Y^{x_2,y_1}_t, \nabla_x Y^{x_1,y_1}_t\right)\right.\nonumber\\
&&\quad\left.-\int^1_0\nabla^2_{y}b(x_2,\xi Y^{x_1,y_2}_t+(1-\xi)Y^{x_2,y_2}_t)d\xi\cdot \left(Y^{x_1,y_2}_t-Y^{x_2,y_2}_t, \nabla_x Y^{x_1,y_2}_t\right)\right\|\nonumber\\
\leq \!\!\!\!\!\!\!\!&& C\EE\left[\left(|Y^{x_1,y_1}_t-Y^{x_1,y_2}_t|^{\delta}+|Y^{x_2,y_1}_t-Y^{x_2,y_2}_t|^{\delta}\right)|Y^{x_1,y_1}_t-Y^{x_2,y_1}_t|\|\nabla_x Y^{x_1,y_1}_t\|\right]\nonumber\\
&&+C\EE\left(|Y^{x_1,y_1}_t-Y^{x_2,y_1}_t-Y^{x_1,y_2}_t+Y^{x_2,y_2}_t|\|\nabla_x Y^{x_1,y_1}_t\|\right)\nonumber\\
&&+C\EE\left(|Y^{x_1,y_1}_t-Y^{x_2,y_1}_t|\|\nabla_x Y^{x_1,y_1}_t-\nabla_x Y^{x_1,y_2}_t\|\right)\nonumber\\
\leq \!\!\!\!\!\!\!\!&&Ce^{-\frac{\beta \delta t}{2}}|x_1-x_2||y_1-y_2|^{\delta}(1+|y_1-y_2|^{1-\delta}).\label{Q132}
\end{eqnarray}
The assumption $f\in C^{1+\gamma,2+\gamma}_b$ implies that $\nabla_x\nabla_yf(x,y)$ is H\"{o}lder continuous with respect to $x$ with index $\gamma$ and $\nabla^2_{y}f(x,y)$ is H\"{o}lder continuous with respect to $y$ with index $\gamma$, and $\|\nabla_x\nabla^2_{y}f(x,y)\|$ is uniformly
bounded. By a straightforward computation, we get
\begin{eqnarray}
\sup_{ y\in\RR^{d_2}} \EE\|\nabla_y\nabla_x Y^{x_1,y}_t-\nabla_y\nabla_x Y^{x_2,y}_t\|\leq C e^{-\frac{\beta t}{4}}|x_1-x_2|^{\gamma}(1+|x_1-x_2|^{1-\gamma}),\label{partial yxx}
\end{eqnarray}
and $\nabla_y\nabla_x Y^{x,y}_t$ satisfies
 \begin{align*}
d\nabla_y\nabla_x Y^{x,y}_t=&\nabla_y\nabla_x f(x,Y^{x,y}_{t})\cdot \nabla_y Y^{x,y}_tdt+\partial^2_y f(x,Y^{x,y}_{t})\cdot(\nabla_y Y^{x,y}_t,\nabla_x Y^{x,y}_t)dt\\
&+\nabla_y f(x,Y^{x,y}_{t})\cdot \nabla_y\nabla_x Y^{x,y}_t dt,
\nabla_y\nabla_x Y^{x,y}_0=0.
\end{align*}
Using \eref{partial xy Y} and \eref{partial yxx}, we get
\begin{eqnarray}
Q_{133}\leq \!\!\!\!\!\!\!\!&& \EE\left\|\left[\nabla_yb(x_2,Y^{x_2,y_1}_t)-\nabla_yb(x_2,Y^{x_2,y_2}_t)\right]\cdot(\nabla_x Y^{x_1,y_1}_t-\nabla_x Y^{x_2,y_1}_t)\right\|\nonumber\\
&&+\EE\left\|\nabla_yb(x_2,Y^{x_2,y_2}_t)\cdot (\nabla_x Y^{x_1,y_1}_t-\nabla_x Y^{x_2,y_1}_t-\nabla_x Y^{x_1,y_2}_t+\nabla_x Y^{x_2,y_2}_t)\right\|\nonumber\\
\leq \!\!\!\!\!\!\!\!&&Ce^{-\frac{\beta t}{4}}|x_1-x_2|^{\gamma}(1+|x_1-x_2|^{1-\gamma})|y_1-y_2|.\label{Q133}
\end{eqnarray}
Combining \eref{Q131}, \eref{Q132} and \eref{Q133}, we get
\begin{eqnarray}
Q_{13}\leq Ce^{-\frac{(\beta\wedge2\delta)t}{4}}|x_1-x_2|^{\gamma}|y_1-y_2|^{\delta}(1+|y_1-y_2|^{1-\delta})(1+|x_1-x_2|^{1-\gamma}).\label{Q13}
\end{eqnarray}
Finally, \eref{Q11},\eref{Q12} and \eref{Q13} together imply
\begin{eqnarray}
Q_1\leq \!\!\!\!\!\!\!\!&&Ce^{-\frac{(\beta\wedge2\delta)t}{4}}|x_1-x_2|^{\gamma} (1+|x_1-x_2|^{1-\gamma})\EE(|y-Y^{x_1,y}_{t_0}|+|y-Y^{x_1,y}_{t_0}|^{\delta})\nonumber\\
\leq \!\!\!\!\!\!\!\!&&Ce^{-\frac{(\beta\wedge2\delta)t}{4}}|x_1-x_2|^{\gamma}(1+|x_1-x_2|^{1-\gamma})(1+|y|).\label{Q1}
\end{eqnarray}

(ii) For the term $Q_2$, note that
\begin{eqnarray*}
\nabla_y\nabla_x\hat b(x, y, t)=\!\!\!\!\!\!\!\!&& \nabla_y\EE \left[\nabla_x b(x, Y^{x,y}_{t})\right]+\nabla_y\EE\left[ \nabla_yb(x,Y^{x,y}_{t})\cdot\nabla_xY^{x,y}_t \right]\nonumber\\
=\!\!\!\!\!\!\!\!&& \EE \left[ \nabla_x\nabla_yb(x,Y^{x,y}_t)\cdot\nabla_yY^{x,y}_t\right]+\EE\left[ \nabla^2_{y}b(x,Y^{x,y}_{t})\cdot(\nabla_xY^{x,y}_t, \nabla_y Y^{x,y}_t)\right]\\
&&+ \EE \left[ \nabla_yb(x,Y^{x,y}_t)\cdot\nabla_x\nabla_yY^{x,y}_t\right].
\end{eqnarray*}
\eref{partial xy Y} implies
$$
\sup_{x\in\RR^{d_1}, y\in\RR^{d_2}}\|\nabla_x\nabla_yY^{x,y}_t\|\leq Ce^{-\frac{\beta t}{4}}.
$$
Combining this with with \eref{S0} and \eref{partial yY}, we get
\begin{eqnarray}
\sup_{x\in\RR^{d_1}, y\in\RR^{d_2}}\left\|\nabla_x\nabla_y\hat b( x,y, t)\right\|\leq C e^{-\frac{\beta t}{4}}.\label{partial xy hatb}
\end{eqnarray}
Thus
\begin{eqnarray}
Q_2\leq\!\!\!\!\!\!\!\!&& C e^{-\frac{\beta t}{4}}\EE|Y^{x_2,y}_{t_0}-Y^{x_1,y}_{t_0}|\leq C e^{-\frac{\beta t}{4}}|x_1-x_2|.\label{Q2}
\end{eqnarray}

(iii) For the term $Q_3$, by a similar argument as in the proof of \eref{partial xy hatb}, we have
$$
\sup_{x\in\RR^{d_1}, y\in\RR^{d_2}}\left\|\nabla^2_{y}\hat b(x, y, t)\right\|\leq C e^{-\frac{\beta t}{4}},
$$
which, together with \eref{partial xy hatb}, implies
\begin{eqnarray}
Q_3\leq C e^{-\frac{\beta t}{4}}|x_1-x_2|.\label{Q3}
\end{eqnarray}
Combining \eref{Q1}, \eref{Q2} and \eref{Q3}, we get \eref{E22}.
The proof is complete.
\end{proof}
\section{Proof of main results}
In this section, we give the proofs of Theorem \ref{main result 1} and Theorem \ref{main result 2}. Our arguments is based the Poisson equation and is inspired by \cite{B2} (see also \cite{PV1,PV2,RSX,RSX2}).

\subsection{The Proof of Theorem \ref{main result 1}}
Note that
\begin{eqnarray*}
X_{t}^{\ep}-\bar{X}_{t}=\!\!\!\!\!\!\!\!&&\int_{0}^{t}\left[b(X_{s}^{\ep},Y_{s}^{\ep})-\bar{b}(\bar{X}_{s})\right]ds\\
=\!\!\!\!\!\!\!\!&&\int_{0}^{t}\left[b(X_{s}^{\ep},Y_{s}^{\ep})-\bar{b}(X^{\ep}_{s})\right]ds+\int_{0}^{t}\left[\bar{b}(X^{\ep}_{s})-\bar{b}(\bar{X}_{s})\right]ds.
\end{eqnarray*}
Using  the Lipschitz continuity of $\bar{b}$, one can easily show that for any $p\in [1,\alpha)$,
\begin{eqnarray*}
&&\EE\left(\sup_{t\in [0, T]}|X_{t}^{\ep}-\bar{X}_{t}|^p\right)\\
\leq\!\!\!\!\!\!\!\!&& C_p\EE\left[\sup_{t\in[0,T]}\left|\int_{0}^{t}b(X_{s}^{\ep},Y_{s}^{\ep})-\bar{b}(X^{\ep}_{s})ds\right|^p\right]+C_{p,T}\EE\int_{0}^{T}|X_{t}^{\ep}-\bar{X}_{t}|^p dt.
\end{eqnarray*}
Grownall's inequality implies
\begin{eqnarray}
\EE\left(\sup_{t\in [0, T]}|X_{t}^{\ep}-\bar{X}_{t}|^p\right)\leq\!\!\!\!\!\!\!\!&&C_{p,T}\EE\left[\sup_{t\in[0,T]}\left|\int_{0}^{t}b(X_{s}^{\ep},Y_{s}^{\ep})-\bar{b}(X^{\ep}_{s})ds\right|^p\right].\label{I3.10}
\end{eqnarray}
By Proposition \ref{P3.6}, there exists a function $\Phi(x,y)$ such that
$\Phi(\cdot,y)\in C^{1}(\RR^{d_1},\RR^{d_1})$, $\Phi(x,\cdot)\in C^{2}(\RR^{d_2}, \RR^{d_1})$ and
\begin{eqnarray}
-\mathscr{L}_{2}(x,y)\Phi(x,y)=b(x,y)-\bar{b}(x).\label{PE}
\end{eqnarray}
Moreover, the estimates \eref{E1} and \eref{E2} hold. By It\^o's formula (see \cite[Theorem 4.4.7]{A}), we have
\begin{eqnarray*}
\Phi(X_{t}^{\ep},Y^{\ep}_{t})=\!\!\!\!\!\!\!\!&&\Phi(x,y)+\int^t_0 \mathscr{L}_{1}(Y^{\ep}_{r})\Phi(X_{r}^{\ep},Y^{\ep}_{r})dr\\
&&+\frac{1}{\ep}\int^t_0 \mathscr{L}_{2}(X_{r}^{\ep},Y^{\ep}_{r})\Phi(X_{r}^{\ep},Y^{\ep}_{r})dr+M^{\ep,1}_{t}+M^{\ep,2}_{t},
\end{eqnarray*}
where $\mathscr{L}_{2}(x,y)\Phi(x,y)$ is defined by \eref{L_2} and
\begin{eqnarray*}
\mathscr{L}_{1}(y)\Phi(x,y):=-(-\Delta_{x})^{\alpha/2}\Phi(x,y)+\langle b(x,y), \nabla_x \Phi(x,y)\rangle,
\end{eqnarray*}
and $M^{\ep,1}_{t}, M^{\ep,2}_{t}$ are two $\mathscr{F}_{t}$-martingales defined by
$$M^{\ep,1}_{t}:=\int^t_0 \int_{\RR^{d_1}}\Phi(X_{r-}^{\ep}+x, Y^{\ep}_{r-})-\Phi(X_{r-}^{\ep}, Y^{\ep}_{r-})\tilde{N}^1(dr,dx),$$
$$M^{\ep,2}_{t}:=\int^t_0 \int_{\RR^{d_2}}\Phi(X_{r-}^{\ep}, Y^{\ep}_{r-}+\ep^{-1/\alpha}y)-\Phi(X_{r-}^{\ep}, Y^{\ep}_{r-})\tilde{N}^2(dr,dy)$$
and $\tilde{N}^i$ $(i=1,2)$ are defined by (\ref{tn}). Consequently, we have
\begin{eqnarray}
\int^t_0 -\mathscr{L}_{2}(X_{r}^{\ep},Y^{\ep}_{r})\Phi(X_{r}^{\ep},Y^{\ep}_{r})dr=\!\!\!\!\!\!\!\!&&\ep\Big[\Phi(x,y)-\Phi(X_{t}^{\ep},Y^{\ep}_{t})\nonumber\\
&&+\int^t_0 \mathscr{L}_{1}(Y^{\ep}_{r})\Phi(X_{r}^{\ep},Y^{\ep}_{r})dr+M^{\ep,1}_{t}+M^{\ep,2}_{t}\Big].\label{PT}
\end{eqnarray}
Combining \eref{I3.10}, \eref{PE} and \eref{PT}, we get
\begin{eqnarray}
&&\EE\left(\sup_{t\in [0, T]}|X_{t}^{\ep}-\bar{X}_{t}|^p\right)\leq C_{p,T}\EE\left[\sup_{t\in[0,T]}\left|\int_{0}^{t}-\mathscr{L}_{2}(X_{r}^{\ep},Y^{\ep}_{r})\Phi(X_{r}^{\ep},Y^{\ep}_{r})dr\right|^p\right]\nonumber\\
\leq\!\!\!\!\!\!\!\!&&C_{p,T}\ep^{p}\Bigg[\EE\left(\sup_{t\in[0,T]}|\Phi(x,y)-\Phi(X_{t}^{\ep},Y^{\ep}_{t})|^p\right)+\EE\int^T_0 \left|\mathscr{L}_{1}(Y^{\ep}_{r})\Phi(X_{r}^{\ep},Y^{\ep}_{r})\right|^p dr\nonumber\\
&&+\EE\left(\sup_{t\in[0,T]}|M^{\ep,1}_{t}|^p\right)+\EE\left(\sup_{t\in[0,T]}|M^{\ep,2}_{t}|^p\right)\Bigg].\label{F5.4}
\end{eqnarray}
By \eref{E1} and \eref{Yvare}, we have
\begin{eqnarray}
\EE\left(\sup_{t\in[0,T]}|\Phi(x,y)-\Phi(X_{t}^{\ep},Y^{\ep}_{t})|^p\right)\leq\!\!\!\!\!\!\!\!&&C(1+|y|^p)+\EE\left(\sup_{t\in[0,T]}|Y^{\ep}_{t}|^p\right)\nonumber\\
\leq\!\!\!\!\!\!\!\!&&C_{p,T}(1+|y|^p)\vare^{-p/\alpha}.\label{F5.5}
\end{eqnarray}
It follows from  \eref{E2} and \eref{E3} that
\begin{eqnarray}
&&\EE\int^T_0\left|\mathscr{L}_{1}(Y^{\ep}_{r})\Phi(X^{\ep}_{r},Y^{\ep}_{r})\right|^p dr\nonumber\\
\leq\!\!\!\!\!\!\!\!&&C_p\EE\int^T_0\left[\int_{|z|\leq 1}|\Phi(X^{\ep}_{s}+z,Y^{\ep}_{s})-\Phi(X^{\ep}_{s},Y^{\ep}_{s})-\langle z, \nabla_x \Phi(X^{\ep}_{s},Y^{\ep}_{s})\rangle |\nu_1(dz)\right]^pds\nonumber\\
&&+C_{p,T}\EE\int^T_0\left[\int_{|z|> 1}|\Phi(X^{\ep}_{s}+z,Y^{\ep}_{s})-\Phi(X^{\ep}_{s},Y^{\ep}_{s})|\nu_1(dz)\right]^p ds\nonumber\\
&&+C_{p,T}\EE\int^T_0|\langle b(X^{\ep}_{s},Y^{\ep}_{s}), \nabla_x \Phi(X^{\ep}_{s},Y^{\ep}_{s})\rangle|^p ds\nonumber\\
\leq\!\!\!\!\!\!\!\!&&C_{p,T}\EE\int^T_0\left[\int_{|z|\leq 1}|z|^{\gamma+1}(1+|z|^{1-\gamma})\nu_1(dz)\right]^p(1+|Y^{\ep}_{s}|^p)ds\nonumber\\
&&+C_{p,T}\EE\int^T_0\left[\int_{|z|> 1}|z|\nu_1(dz)\right]^p (1+|Y^{\ep}_{s}|^p)ds\nonumber\\
&&+C_{p,T}\EE\int^T_0 \left(1+|X^{\ep}_{s}|^p+|Y^{\ep}_{s}|^p\right)(1+|Y^{\ep}_{s}|^{\theta})ds\nonumber\\
\leq\!\!\!\!\!\!\!\!&&C_{p,T}(1+|x|^p+|y|^p)+C_{p,T}\EE\int^T_0 \left(1+|X^{\ep}_{s}|^{p'}+|Y^{\ep}_{s}|^{\frac{\theta p'}{p'-p}\vee (p+\theta)}\right)ds\nonumber\\
\leq\!\!\!\!\!\!\!\!&&C_{p,T}\left((1+|x|^{p'}+|y|^{\frac{\theta p'}{p'-p}\vee (p+\theta)}\right),\label{F5.6}
\end{eqnarray}
where $p<p'<\alpha$ and $\theta$ is small enough such that $\frac{\theta p'}{p'-p}\vee (p+\theta)<\alpha$.

Using Burkholder-Davis-Gundy's inequality  (see e.g. \cite[Lemma 8.22]{PZ1}) and \eref{E2}, we get for any $\theta\in (0,1/2]$,
\begin{eqnarray}
&&\EE\left(\sup_{t\in[0,T]}|M^{\ep,1}_{t}|^p\right)\nonumber\\
\leq\!\!\!\!\!\!\!\!&&C_p\EE\left[\sup_{t\in[0,T]}\left|\int^t_0\int_{|x|\leq 1}\Phi(X^{\ep}_{s-}+x,Y^{\ep}_{s-})-\Phi(X^{\ep}_{s-},Y^{\ep}_{s-})\tilde{N}^1(ds,dx)\right|^p\right]\nonumber\\
&&+C_p\EE\left[\sup_{t\in[0,T]}\left|\int^t_0\int_{|x|>1}\Phi(X^{\ep}_{s-}+x,Y^{\ep}_{s-})-\Phi(X^{\ep}_{s-},Y^{\ep}_{s-})\tilde{N}^1(ds,dx)\right|^p\right]\nonumber\\
\leq\!\!\!\!\!\!\!\!&&C_p\EE\left|\int^T_0\int_{|x|\leq 1}\left|\Phi(X^{\ep}_{s-}+x,Y^{\ep}_{s-})-\Phi(X^{\ep}_{s-},Y^{\ep}_{s-})\right|^2 N^1(ds,dx)\right|^{p/2}\nonumber\\
&&+C_p\EE\int^T_0\int_{|x|>1}\left|\Phi(X^{\ep}_{s-}+x,Y^{\ep}_{s-})-\Phi(X^{\ep}_{s-},Y^{\ep}_{s-})\right|^p\nu_1(dx)ds\nonumber\\
\leq\!\!\!\!\!\!\!\!&&C_p\left[\EE\int^T_0\int_{|x|\leq 1}|x|^2 \nu_1(dx)(1+|Y^{\ep}_{s}|^{2\theta})ds\right]^{p/2}\nonumber\\
&&+C_p\EE\int^T_0\int_{|x|>1}|x|^{p}\nu_1(dx)(1+|Y^{\ep}_{s}|^{p\theta})ds
\leq C_{p,T}(1+|y|^p),\label{F5.7}
\end{eqnarray}
and by  \eref{E1}, we have
\begin{eqnarray}
&&\EE\left(\sup_{t\in[0,T]}|M^{\ep,2}_{t}|^p\right)\nonumber\\
\leq\!\!\!\!\!\!\!\!&& C_p\EE\left[\sup_{t\in[0,T]}\left|\int^t_0\int_{|y|\leq 1}\Phi(X^{\ep}_{s-},Y^{\ep}_{s-}+\ep^{-1/\alpha}y)-\Phi(X^{\ep}_{s-},Y^{\ep}_{s-})\tilde{N}^2(ds,dy)\right|^p\right]\nonumber\\
&&+C_p\EE\left[\sup_{t\in[0,T]}\left|\int^t_0\int_{|y|>1}\Phi(X^{\ep}_{s-},Y^{\ep}_{s-}+\ep^{-1/\alpha}y)-\Phi(X^{\ep}_{s-},Y^{\ep}_{s-})\tilde{N}^2(ds,dy)\right|^p\right]\nonumber\\
\leq\!\!\!\!\!\!\!\!&&C_p\EE\left|\int^T_0\int_{|y|\leq 1}|\Phi(X^{\ep}_{s-},Y^{\ep}_{s-}+\ep^{-1/\alpha}y)-\Phi(X^{\ep}_{s-},Y^{\ep}_{s-})|^2 N^2(ds,dy)\right|^{p/2}\nonumber\\
&&+C_p\EE\int^T_0\int_{|y|>1}|\Phi(X^{\ep}_{s-},Y^{\ep}_{s-}+\ep^{-1/\alpha}y)-\Phi(X^{\ep}_{s-},Y^{\ep}_{s-})|^{p}\nu_2(dy)ds\nonumber\\
\leq\!\!\!\!\!\!\!\!&&C_p\ep^{-p/\alpha}\left\{\left[\int^T_0\int_{|y|\leq 1}|y|^2 \nu_2(dy)ds\right]^{p/2}\!\!+\!\int^T_0\int_{|y|>1}|y|^{p}\nu_2(dy)ds\right\}\nonumber\\
\leq\!\!\!\!\!\!\!\!&&C_{p,T}\ep^{-p/\alpha},\label{F5.8}
\end{eqnarray}
where we have also used the fact that $\Phi(x,\cdot)\in C^1_b(\RR^{d_2})$. Hence, \eref{F5.4}-\eref{F5.8} imply that
\begin{eqnarray*}
\mathbb{E}\left(\sup_{t\in[0,T]}|X_{t}^{\ep}-\bar{X}_{t}|^p\right)\leq C_{p,T}(1+|x|^p+|y|^p)\ep^{p(1-1/\alpha)}.
\end{eqnarray*}
The proof is complete.

\vskip 0.2cm
\subsection{The Proof of Theorem \ref{main result 2}}

We consider the following Kolmogorov equation:
\begin{equation}\left\{\begin{array}{l}\label{KE}
\displaystyle
\partial_t u(t,x)=\bar{\mathscr{L}}_1 u(t,x),\quad t\in[0, T], \\
u(0, x)=\phi(x),
\end{array}\right.
\end{equation}
where $\phi\in C^{2+\gamma}_b(\RR^{d_1})$ and $\bar{\mathscr{L}}_1$ is the infinitesimal generator of the transition semigroup of the averaged equation \eref{1.3}, which is given by
\begin{eqnarray*}
\bar{\mathscr{L}}_1\phi(x):=-(-\Delta_x)^{\alpha/2}\phi(x)+\langle \bar{b}(x), \nabla_x  \phi(x)\rangle.
\end{eqnarray*}
Since $b,f\in C^{2+\gamma,2+\gamma}_b$, one can check by straightforward computation   that $\bar{b}\in C^{2+\gamma}_b(\RR^{d_1})$. Thus, equation \eref{KE} has a unique solution $u$ which is given by
$$
u(t,x)=\EE\phi(\bar{X}_t(x)),\quad t\in [0,T].
$$
Furthermore, $u(t,\cdot)\in C^{2+\gamma}_b(\RR^{d_1}), \nabla_x u(\cdot,x)\in C^1([0,T])$ and there exists $C_T>0$ such that
\begin{eqnarray}
\sup_{t\in[0, T]}\|u(t,\cdot)\|_{C^{2+\gamma}_b}\leq C_T,\quad \sup_{t\in[0, T],x\in\RR^{d_1}}\|\partial_t(\nabla_x u(t,x))\|\leq C_T.\label{UE}
\end{eqnarray}

Now we are in a position to give:

\begin{proof}[Proof of Theorem \ref{main result 2}]
For fixed $t>0$, let $\tilde{u}^t(s,x):=u(t-s,x)$, $s\in [0,t]$. By It\^{o}'s formula, we have
\begin{eqnarray*}
\tilde{u}^t(t, X^{\ep}_t)=\!\!\!\!\!\!\!\!&&\tilde{u}^t(0,x)+\int^t_0 \partial_s \tilde{u}^t(s, X^{\ep}_s )ds+\int^t_0 \mathscr{L}_{1}(Y^{\ep}_s)\tilde{u}^t(s, X^{\ep}_s)ds+\tilde{M}_t,
\end{eqnarray*}
where $\tilde{M}_t$ is a $\mathscr{F}_{t}$-martingale defined by,
$$
\tilde{M}_t:=\int^t_0 \int_{\RR^{d_1}}\tilde{u}^t(s,X_{s-}^{\ep}+x)-\tilde{u}^t(s,X_{s-}^{\ep})\tilde{N}^1(dx,ds).
$$
Note that $\tilde{u}^t(t, X^{\ep}_t)=\phi(X^{\ep}_t)$, $\tilde{u}^t(0, x)=\EE\phi(\bar{X}_t(x))$ and $\nabla_s \tilde{u}^t(s, X^{\ep}_s )=-\bar{\mathscr{L}}_1 \tilde{u}^t(s, X^{\ep}_s)$, we have
\begin{eqnarray}
\left|\EE\phi(X^{\ep}_{t})-\EE\phi(\bar{X}_{t})\right|=\!\!\!\!\!\!\!\!&&\left|\EE\int^t_0 -\bar{\mathscr{L}}_1 \tilde{u}^t(s, X^{\ep}_s )ds+\EE\int^t_0 \mathscr{L}_{1}(Y^{\ep}_s)\tilde{u}^t(s, X^{\ep}_s)ds\right|\nonumber\\
=\!\!\!\!\!\!\!\!&&\left|\EE\int^t_0 \langle b(X^{\ep}_s,Y^{\ep}_s)-\bar{b}(X^{\ep}_s), \nabla_x \tilde{u}^t(s, X^{\ep}_s )\rangle ds \right|.\label{F5.11}
\end{eqnarray}
For any $s\in [0,t], x\in\RR^{d_1},y\in\RR^{d_2}$, define
$$
F^t(s,x,y):=\langle b(x,y), \nabla_x \tilde{u}^t(s, x)\rangle
$$
and $
\bar{F}^t(s,x):=\int_{\RR^{d_2}} F^t(s,x,y)\mu^x(dy)=\langle \bar{b}(x), \nabla_x \tilde{u}^t(s, x)\rangle.
$
Since $b$ is bounded, $b\in C^{2+\gamma,2+\gamma}_b$ and $\tilde{u}^t(s,\cdot)\in C^{2+\gamma}_b(\RR^{d_1})$, we have
$$
F^t(s,\cdot,\cdot)\in C^{1+\gamma,2+\gamma}_b,\quad \nabla_sF^t(s,x,\cdot)\in C^1_b(\RR^{d_2}).
$$
Using \eref{UE} and an argument similar to that used in the proof of Proposition \ref{P3.6}, we can get that
$$
\tilde{\Phi}^t(s, x,y):=\int^{\infty}_0  \left[\EE F^t(s,x,Y^{x,y}_r)-\bar{F}^t(s,x)\right]dr
$$
is a solution of the following Poisson equation:
\begin{eqnarray}
-\mathscr{L}_{2}(x,y)\tilde{\Phi}^t(s,x,y)=F^t(s,x,y)-\bar{F}^t(s,x),\quad s\in [0,t].\label{WPE}
\end{eqnarray}
Moreover, $\tilde{\Phi}^t(\cdot, x,y)\in C^{1}([0, t])$, $\tilde{\Phi}^t(s, \cdot,y)\in C^{1}(\RR^{d_1})$, $\tilde{\Phi}^t(s, x,\cdot)\in C^{2}(\RR^{d_2})$ and for any $T>0$, $t\in [0,T]$,$\theta\in (0,1]$, there exist $C_T, C_{T,\theta}>0$ such that the following estimates hold:
\begin{eqnarray}
\sup_{s\in [0, t],x\in\RR^{d_1}}\left[|\tilde{\Phi}^t(s,x,y)|+|\nabla_s \tilde{\Phi}^t(s,x,y)|\right]\leq C_T(1+|y|),\label{E121}
\end{eqnarray}
\begin{eqnarray}
\sup_{s\in [0, t],x\in\RR^{d_1}}\left|\nabla_x \tilde{\Phi}^t(s,x,y)\right|\leq C_{T,\theta}(1+|y|^{\theta}),\label{E122}
\end{eqnarray}
\begin{eqnarray}
\sup_{s\in[0,t]}\left\| \nabla_x\tilde{\Phi}^t(s,x_1, y)\!- \!\nabla_x\tilde{\Phi}^t(s,x_2, y)\right\|\leq \!\!C_T|x_1\!-\!x_2|^{\gamma}\!(1\!+\!|x_1\!-\!x_2|^{1-\gamma})(1\!+\!|y|). \label{E221}
\end{eqnarray}
Using It\^o's formula and taking expectation on both sides, we get
\begin{eqnarray*}
\EE\tilde{\Phi}^t(t, X_{t}^{\ep},Y^{\ep}_{t})=\!\!\!\!\!\!\!\!&&\tilde \Phi^t(0, x,y)\!+\!\EE\int^t_0 \nabla_s \tilde{\Phi}^t(s, X_{s}^{\ep},Y^{\ep}_{s})ds\!+\!\EE\int^t_0\mathscr{L}_{1}(Y^{\ep}_{s})\tilde\Phi^t(s, X_{s}^{\ep},Y^{\ep}_{s})ds\\
&&+\frac{1}{\ep}\EE\int^t_0 \mathscr{L}_{2}(X_{s}^{\ep},Y^{\ep}_{s})\tilde{\Phi}^t(s, X_{s}^{\ep},Y^{\ep}_{s})ds,
\end{eqnarray*}
which implies
\begin{align}
&-\EE\int^t_0 \mathscr{L}_{2}(X_{s}^{\ep},Y^{\ep}_{s})\tilde\Phi^t(s, X_{s}^{\ep},Y^{\ep}_{s})ds= \ep\big[\tilde{\Phi}^t(0, x,y)-\EE\tilde{\Phi}^t(t, X_{t}^{\ep},Y^{\ep}_{t})\nonumber\\
&\qquad+\EE\int^t_0 \nabla_s \tilde{\Phi}^t(s, X_{s}^{\ep},Y^{\ep}_{s})ds +\EE\int^t_0\mathscr{L}_{1}(Y^{\ep}_{s})\tilde{\Phi}^t(s, X_{s}^{\ep},Y^{\ep}_{s})ds\big].\label{F3.39}
\end{align}
Combining  \eref{F5.11}, \eref{WPE} and \eref{F3.39}, we get
\begin{eqnarray*}
\sup_{t\in[0,T]}\left|\EE\phi(X^{\ep}_{t})-\EE\phi(\bar{X}_{t})\right|=\!\!\!\!\!\!\!\!&&\sup_{t\in[0,T]}\left|\EE\int^t_0 \mathscr{L}_{2}(X_{s}^{\ep},Y^{\ep}_{s})\tilde{\Phi}^t(s, X_{s}^{\ep},Y^{\ep}_{s})ds\right|\\
\leq\!\!\!\!\!\!\!\!&&\ep\Bigg[\sup_{t\in [0,T]}|\tilde{\Phi}^t(0, x,y)|+\sup_{t\in[0,T]}\left|\EE\tilde{\Phi}^t(t, X_{t}^{\ep},Y^{\ep}_{t})\right|\nonumber\\
&&+\sup_{t\in [0,T]}\EE\int^t_0\left|\nabla_s \tilde{\Phi}^t(s, X_{s}^{\ep},Y^{\ep}_{s})\right|ds\nonumber\\
&&+\sup_{t\in [0,T]}\EE\int^t_0\left|\mathscr{L}_{1}(Y^{\ep}_{s})\tilde{\Phi}^t(s, X_{s}^{\ep},Y^{\ep}_{s})\right|ds\Bigg].
\end{eqnarray*}
Finally, using \eref{E121}, \eref{E122}, \eref{E221} and an argument similar to that used in the proof of \eref{F5.6}, we easily get
\begin{eqnarray*}
\sup_{t\in[0,T]}\left|\EE\phi(X^{\ep}_{t})-\EE\phi(\bar{X}_{t})\right|\leq C \ep,
\end{eqnarray*}
where $C$ is a constant depending on $T$, $x$ and $y$. The proof is complete.
\end{proof}

\section{Appendix}

Let us first recall some facts about the isotropic $\alpha$-stable processes. For $k=1,2$, let $L^{k}_{t}$ be two isotropic $\alpha$-stable processes in $\RR^{d_k}$.
The associated Poisson random measure are defined by (see e.g. \cite{A})
$$
N^{k}(t,\Gamma)=\sum_{s\leq t}1_{\Gamma}(L^{k}_s-L^{k}_{s-}),\quad \forall\Gamma\in \mathcal{B}(\RR^{d_k}),
$$
where $\mathcal{B}(\RR^{d_k})$ are  the Borel $\sigma$-algebra of $\RR^{d_k}$. The corresponding compensated Poisson measure are given by
\begin{align}\label{tn}
\widetilde{N}^{k}(t,\Gamma)=N^{k}(t,\Gamma)-t\nu_k(\Gamma),
\end{align}
where  $\nu_k(dy):=\frac{c_{\alpha,d_k}}{|y|^{d_k+\alpha}}dy$ are the L\'evy measures, and $c_{\alpha,d_k}>0$ are constants.
By L\'evy-It\^o's decomposition and the symmetric of $\nu_k(dy)$, one has for any $c>0$,
\begin{eqnarray}
L^{k}_{t}=\int_{|x|\leq c}x\widetilde{N}^{k}(t,dx)+\int_{|x|>c}x N^{k}(t,dx).\label{555}
\end{eqnarray}

Concerning the multiscale system (\ref{Equation}), we have the following result.

\begin{lemma} \label{PMY}
Suppose the assumptions in Theorem \ref{main result 1} hold.
Then for any $\ep>0$, initial value $x\in\RR^{d_1}, y\in \RR^{d_2}$, there exists a unique strong solution $\{(X^{\ep}_t,Y^{\ep}_t), t\geq 0\}$ to system \eref{Equation}.
Moreover, for any $p\in[1,\alpha)$ and  $T>0$, there exist constants $ C_p$ and $C_{p,T}>0$ such that
\begin{eqnarray}
\sup_{\ep\in(0,1)}\mathbb{E}\left(\sup_{t\in [0, T]}|X_{t}^{\ep}|^p\right)\leq C_{p,T}(1+|x|^p+|y|^p)\label{X}
\end{eqnarray}
and
\begin{eqnarray}
\sup_{\ep\in(0,1)}\sup_{t\geq 0}\mathbb{E}|Y_{t}^{\ep}|^p\leq C_{p}(1+|y|^p).\label{Y}
\end{eqnarray}
\end{lemma}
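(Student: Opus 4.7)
My plan is to split the argument into three stages, treating existence/uniqueness first and then the two moment bounds in the order dictated by their logical dependence.

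\textbf{Stage 1 (existence and uniqueness).} Since $b\in C^{1+\gamma,2+\delta}_b$ and $f\in C^{1+\gamma,2+\gamma}_b$, both have bounded first partial derivatives, so both are globally Lipschitz in $(x,y)$. Together with the bound $\sup_x|f(x,0)|<\infty$ from \eqref{sm}, the system \eqref{Equation} satisfies the standard hypotheses that guarantee existence and uniqueness of a strong solution for SDEs driven by Lévy noise (see e.g.\ \cite[Theorem 6.2.3]{A}). This handles the qualitative part of the lemma.

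\textbf{Stage 2 (uniform moment bound on $Y^\ep$).} To remove the singular $\ep$-scaling, I use the time-change $\tilde Y^\ep_t:=Y^\ep_{\ep t}$ already introduced in \eqref{R3.11}: this process satisfies
$$
\tilde Y^\ep_t = y + \int_0^t f\bigl(X^\ep_{\ep s},\tilde Y^\ep_s\bigr)\,ds + \tilde L^2_t,
$$
where $\tilde L^2$ is again an isotropic $\alpha$-stable process in $\RR^{d_2}$. Because $p\in[1,\alpha)\subset[1,2)$ and $|y|^p$ is not $C^2$ at the origin, I apply It\^o's formula to the smooth surrogate $\psi(y):=(1+|y|^2)^{p/2}$. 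The dissipativity \eqref{sm} specialized at $y_2=0$ gives $\langle f(x,y),y\rangle\leq -\beta|y|^2+C|y|$, which combined with $\nabla\psi(y)=p(1+|y|^2)^{(p-2)/2}y$ yields a drift contribution dominated by $-c_1\psi(y)+c_2$. For the non-local term, using $\|\nabla\psi\|_\infty<\infty$, a bounded Hessian, and the decomposition $\{|z|\leq 1\}\cup\{|z|>1\}$, I bound
$$
\int_{\RR^{d_2}}\bigl[\psi(y+z)-\psi(y)-\langle z,\nabla\psi(y)\rangle\mathbf 1_{|z|\leq 1}\bigr]\nu_2(dz)
$$
by a constant depending only on $\int_{|z|\leq 1}|z|^2\nu_2(dz)<\infty$ and $\int_{|z|>1}|z|^p\nu_2(dz)<\infty$ (the latter finite precisely because $p<\alpha$). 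Putting these pieces together gives $\mathscr L^x\psi(y)\leq -c_1\psi(y)+c_3$ uniformly in $x$, and a Grönwall argument for $t\mapsto\EE\psi(\tilde Y^\ep_t)$ produces $\EE\psi(\tilde Y^\ep_t)\leq e^{-c_1 t}\psi(y)+c_3/c_1$. Undoing the time change and using $|y|^p\leq\psi(y)$ delivers \eqref{Y}.

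\textbf{Stage 3 (moment bound on $X^\ep$).} The bounded first derivatives of $b$ force linear growth $|b(x,y)|\leq C(1+|x|+|y|)$, so
$$
\sup_{s\leq t}|X^\ep_s|\leq |x|+C\int_0^t\bigl(1+|X^\ep_r|+|Y^\ep_r|\bigr)dr+\sup_{s\leq T}|L^1_s|.
$$
Taking $p$-th moments (for $p<\alpha$ so that $\EE\sup_{s\leq T}|L^1_s|^p<\infty$), inserting the bound \eqref{Y} and applying Grönwall yields \eqref{X}.

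The main obstacle is Stage 2: one must carry out It\^o's formula with the non-local generator $\mathscr L_2$ and extract from it a genuinely exponentially decaying Lyapunov-type inequality $\mathscr L_2^x\psi\leq -c_1\psi+c_3$ uniformly in $x$ and $\ep$. The delicate points are choosing $\psi$ to handle the non-smoothness of $|y|^p$ at the origin, controlling the jump integral by separating small and large jumps, and verifying that all constants are independent of $\ep$, which is why the time-change trick is essential---it absorbs every $\ep$ into the rescaled stable process, whose law is unchanged.
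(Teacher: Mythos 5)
Your proposal is correct and follows essentially the same route as the paper: global Lipschitz coefficients for well-posedness, a Gr\"onwall argument for $X^\ep$ after controlling $\EE\sup_{t\le T}|L^1_t|^p$, and for $Y^\ep$ an It\^o--Lyapunov argument with the smoothed norm $(1+|y|^2)^{p/2}$, the dissipativity \eref{sm}, and a small/large-jump splitting. The only structural difference is that you perform the time change $\tilde Y^\ep_t=Y^\ep_{\ep t}$ \emph{before} applying It\^o's formula, whereas the paper applies It\^o directly to $Y^\ep_t$ and observes that the resulting differential inequality $\frac{d}{dt}\EE U(Y^\ep_t)\le -\frac{\eta}{2\ep}\EE U(Y^\ep_t)+\frac{C_p}{\ep}$ still yields an $\ep$-uniform bound after comparison; the two computations are equivalent. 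One small inaccuracy in Stage 2: for $p\in(1,2)$ the gradient $\nabla\psi(y)=p(1+|y|^2)^{(p-2)/2}y$ is \emph{not} bounded (it grows like $|y|^{p-1}$), so the large-jump integral $\int_{|z|>1}[\psi(y+z)-\psi(y)]\,\nu_2(dz)$ is not bounded by a constant but by $C(1+|y|^{p-1})$; this term must be absorbed into the negative drift via Young's inequality, exactly as the paper does in its estimate of the term $J_3$. With that correction your Lyapunov inequality $\mathscr{L}_2\psi\le -c_1\psi+c_3$ holds and the rest of the argument goes through.
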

\begin{proof}
Since $b,f$ are globally Lipschitz continuous with respect to $(x,y)$, by \cite[Theorems 6.2.3 and Theorem 6.2.11]{A}, there exists a unique solution $\{(X^{\ep}_t,Y^{\ep}_t), t\geq 0\}$ to the system (\ref{Equation}).
By Burkholder-Davis-Gundy's inequality (see e.g. \cite[Lemma 8.22]{PZ1}), we have for any $1\leq p< \alpha$,
\begin{align*}
&\EE\left(\sup_{t\in [0,T]}|L^{1}_t|^p\right)\leq  C_{p}\EE \left[\int_{|x|\leq1}|x|^2 N^{1}(T,dx)\right]^{p/2}\\
&\quad+C_{p,T}\int_{|x|>1}|x|^p \nu_1(dx)+C_{p,T}\left[\int_{|x|>1}|x|\nu_{1}(dx)\right]^p\\
&\leq C_{p,T}\left\{\left[\int_{|x|\leq1}|x|^2 \nu_1(dx)\right]^{p/2}+\int_{|x|>1}|x|^p \nu_1(dx)+\left[\int_{|x|>1}|x|\nu_{1}(dx)\right]^p\right\}\leq  C_{p,T}.
\end{align*}
It is easy to see
\begin{eqnarray*}
\mathbb{E}\left(\sup_{t\in [0,T]}|X^{\ep}_t|^p\right)\leq\!\!\!\!\!\!\!\!&&C_{p}|x|^p+C_{p,T}\int^{T}_{0}\EE|X^{\ep}_s|^pds+C_{p,T}\int^{T}_{0}\EE|Y^{\ep}_s|^p ds\\
&&+C_{p}\EE\left(\sup_{0\leq t\leq T}|L^{1}_t|^p\right)\nonumber\\
\leq\!\!\!\!\!\!\!\!&&C_{p,T}(|x|^p+1)+C_{p,T}\int^{T}_{0}\EE|X^{\ep}_s|^pds+C_{p,T}\int^{T}_{0}\EE|Y^{\ep}_s|^p ds.
\end{eqnarray*}
Thus, Grownall's inequality yields
\begin{eqnarray}
\mathbb{E}\left(\sup_{t\in [0,T]}|X^{\ep}_t|^p\right)\leq\!\!\!\!\!\!\!\!&&C_{p,T}(|x|^p+1)+C_{p,T}\int^{T}_{0}\EE|Y^{\ep}_s|^p ds.\label{F3.8}
\end{eqnarray}

Next we estimate $Y^{\vare}_t$. Define
$$U(y):=(|y|^{2}+1)^{p/2}, \quad y\in \RR^{d_2}.$$
Then for any $y\in\RR^{d_2}$, it is easy to check that
\begin{eqnarray}
|D U(y)|=\left|\frac{p y}{(|y|^2+1)^{1-p/2}}\right|\leq C_p|y|^{p-1}\label{DU}
\end{eqnarray}
and
\begin{eqnarray}
\|D^2 U(y)\|=\left\|\frac{p \text{I}_{d_2\times d_2}}{(|y|^2+1)^{1-p/2}}-\frac{p(p-2)y\otimes y}{(|y|^2+1)^{2-p/2}}\right\|\leq \frac{C_p}{(|y|^2+1)^{1-p/2}}\leq C_p. \label{D^2U}
\end{eqnarray}
Note that by (\ref{555}), $Y^{\ep}_t$ can be rewritten as
\begin{eqnarray*}
Y^{\ep}_t=\!\!\!\!\!\!\!\!&&y+\frac{1}{\ep}\int^t_0 f(X^{\ep}_{s},Y^{\ep}_s )ds+\int^t_0\int_{|z|\leq \ep^{1/\alpha}}\ep^{-1/\alpha}z\widetilde{N}^{2}(ds,dz)\\
&&+\int^t_0\int_{|z|> \ep^{1/\alpha}}\ep^{-1/\alpha}z N^{2}(ds,dz).
\end{eqnarray*}
Using It\^{o} formula and taking expectation on both sides, we get
\begin{eqnarray*}
\EE U(Y^{\ep}_t)=\!\!\!\!\!\!\!\!&&U(y)+\frac{1}{\ep}\EE\int^{t}_{0}\langle f(X^{\ep}_{s},Y^{\ep}_s ),D U(Y^{\ep}_{s})\rangle ds\\
&&+\EE\int^{t}_{0}\int_{|z|\leq\ep^{1/\alpha}}\left[U(Y^{\ep}_{s}+\ep^{-1/\alpha}z)-U(Y^{\ep}_{s})
-\langle D U(Y^{\ep}_s), \ep^{-1/\alpha}z\rangle\right]\nu_2(dz)ds\nonumber\\
&&+\EE\int^{t}_{0}\int_{|z|>\ep^{1/\alpha}}[U(Y^{\ep}_{s}+\ep^{-1/\alpha}z)-U(Y^{\ep}_{s})]\nu_2(dz)ds,
\end{eqnarray*}
which implies
\begin{eqnarray*}
\frac{d\EE U(Y^{\ep}_t)}{dt}=\!\!\!\!\!\!\!\!&&\frac{1}{\ep}\EE\langle f(X^{\ep}_{t},Y^{\ep}_t ),D U(Y^{\ep}_{t})\rangle\\
&&+\EE\int_{|z|\leq\ep^{1/\alpha}}\left[U(Y^{\ep}_{t}+\ep^{-1/\alpha}z)-U(Y^{\ep}_{t})
-\langle D U(Y^{\ep}_t), \ep^{-1/\alpha}z\rangle\right]\nu_2(dz)\nonumber\\
&&+\EE\int_{|z|>\ep^{1/\alpha}}[U(Y^{\ep}_{t}+\ep^{-1/\alpha}z)-U(Y^{\ep}_{t})]\nu_2(dz):=\sum^3_{i=1}J_i(t).
\end{eqnarray*}
For the term $J_1(t)$, by condition \eref{sm}, there exists $\eta>0$ such that
\begin{eqnarray*}
\langle f(X^{\ep}_{t},Y^{\ep}_t ),D U(Y^{\ep}_{t})\rangle=\!\!\!\!\!\!\!\!&&\frac{\langle f(X^{\ep}_{t},Y^{\ep}_t )-f(X^{\ep}_{t},0 ), pY^{\ep}_t\rangle+\langle f(X^{\ep}_{t},0 ), pY^{\ep}_t\rangle}{(|Y^{\ep}_t|^2+1)^{1-p/2}}\\
\leq\!\!\!\!\!\!\!\!&& \frac{-p\beta|Y^{\ep}_t|^2+C_p|Y^{\ep}_t|}{(|Y^{\ep}_t|^2+1)^{1-p/2}}\\
\leq\!\!\!\!\!\!\!\!&& -\eta(|Y^{\ep}_t|^2+1)^{p/2}+C_p.
\end{eqnarray*}
As a consequence,
\begin{eqnarray}
J_1(t)\leq \frac{-\eta\EE U(Y^{\ep}_t)}{\ep}+\frac{C_p}{\ep}.\label{J1}
\end{eqnarray}
For the term $J_2(t)$, by changing variable $y=\ep^{-1/\alpha}z$ and \eref{D^2U}, we obtain
\begin{eqnarray}
J_2(t)\leq \!\!\!\!\!\!\!\!&&\frac{1}{\ep}\EE\int_{|y|\leq 1}\left[U(Y^{\ep}_{t}+y)-U(Y^{\ep}_{t})
-\langle D U(Y^{\ep}_t), y\rangle\right]\nu_2(dy)\nonumber\\
\leq \!\!\!\!\!\!\!\!&&\frac{C_p}{\ep}\EE\int_{|y|\leq 1}|y|^2\nu_2(dy)\leq \frac{C_p}{\ep},\label{J2}
\end{eqnarray}
and by \eref{DU}, we have
\begin{eqnarray}
J_3(t)\leq \!\!\!\!\!\!\!\!&&\frac{1}{\ep}\EE\int_{|y|\geq 1}\left[U(Y^{\ep}_{t}+y)-U(Y^{\ep}_{t})\right]
\nu_2(dy)\nonumber\\
\leq \!\!\!\!\!\!\!\!&&\frac{C_p}{\ep}\EE\int_{|y|\geq 1}\left(|Y^{\ep}_t|^{p-1}+|y|^{p-1}\right)\nu_2(dy)
\leq \frac{\eta\EE U(Y^{\ep}_t)}{2\ep}+\frac{C_p}{\ep}.\label{J3}
\end{eqnarray}
Combining \eref{J1}-\eref{J3}, we get
\begin{eqnarray*}
\frac{d\EE U(Y^{\ep}_t)}{dt}\leq\!\!\!\!\!\!\!\!&&\frac{-\eta\EE U(Y^{\ep}_t)}{2\ep}+\frac{C_p}{\ep}.
\end{eqnarray*}
Now the comparison theorem implies that for any $t\geq 0$,
\begin{eqnarray}
\EE U(Y^{\ep}_t)\leq \!\!\!\!\!\!\!\!&&e^{\frac{-\eta t}{2\ep}}(|y|^2+1)^{p/2}+\frac{C_p}{\ep}\int^t_0 e^{\frac{-(t-s)\eta }{2\ep}}ds
\leq C_{p}(1+|y|^p),\label{F3.11}
\end{eqnarray}
which implies \eref{Y} holds.
Estimate \eref{X} follows immediately from \eref{F3.8} and \eref{F3.11}.
\end{proof}

Concerning the frozen equation (\ref{FEQ}), we have the following result.

\begin{lemma}\label{L3.2}
Assume that $f(x,\cdot)\in C^1_b$ and condition \eref{sm} holds. Then we have for any $1\leq p<\alpha$, $T\geq 1$,
\begin{eqnarray}
\sup_{t\geq 0}\EE|Y_{t}^{x,y}|^p\leq C_p(1+|y|^p),\label{FEq0}
\end{eqnarray}
\begin{eqnarray}
\EE\left(\sup_{t\in [0,T]}|Y_{t}^{x,y}|^p\right)\leq C_p T^{p/\alpha}+|y|^p.\label{FEq}
\end{eqnarray}
\end{lemma}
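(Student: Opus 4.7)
The first estimate \eref{FEq0} is a direct analogue of the bound \eref{Y} for $Y^{\ep}_t$ in Lemma \ref{PMY}, specialized to the case without the $1/\ep$ time scaling. The plan is to apply It\^o's formula to $U(y) := (|y|^2 + 1)^{p/2}$ along the trajectory of $Y^{x,y}_t$. Using the derivative bounds \eref{DU} and \eref{D^2U}, the dissipativity condition \eref{sm}, and the uniform boundedness of $f(x,0)$, one repeats the estimate of the drift term that produced $J_1(t)$ in the proof of Lemma \ref{PMY}. Together with the same treatment of the small-jump and large-jump compensator integrals (giving bounded contributions thanks to \eref{D^2U} and \eref{DU} respectively, since here there is no $1/\ep$ pre-factor to generate a new $\vare^{-1}$ term), this yields a differential inequality of the form
\[
\frac{d}{dt}\,\EE U(Y^{x,y}_t) \leq -\eta\,\EE U(Y^{x,y}_t) + C_p
\]
for some $\eta > 0$. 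The comparison theorem then gives $\EE U(Y^{x,y}_t) \leq e^{-\eta t}(|y|^2+1)^{p/2} + C_p/\eta$ uniformly in $t \geq 0$, which implies \eref{FEq0}.

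For the supremum estimate \eref{FEq} I would avoid applying It\^o's formula directly, since that produces a pure-jump martingale whose $\sup_{t\leq T}$ estimate via BDG is delicate for stable-type integrals. Instead, I would use a pathwise decomposition. Set $Z_t := Y^{x,y}_t - L^2_t$, so that $Z_t$ solves the random ODE
\[
\dot Z_t = f(x,\,Z_t + L^2_t), \qquad Z_0 = y.
\]
Differentiating $|Z_t|^p$ and splitting
\[
\langle f(x,Z_t + L^2_t),\,Z_t\rangle = \langle f(x,Z_t + L^2_t) - f(x,L^2_t),\,Z_t\rangle + \langle f(x,L^2_t),\,Z_t\rangle,
\]
the first term is bounded by $-\beta|Z_t|^2$ via condition \eref{sm} (applied with $y_1 = Z_t + L^2_t$, $y_2 = L^2_t$), while the second is controlled by $C|Z_t|(1+|L^2_t|)$ using the linear growth of $f$ inherited from $f \in C^1_b$ together with $\sup_x|f(x,0)|<\infty$. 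Young's inequality then gives
\[
\frac{d}{dt}|Z_t|^p \leq -\tfrac{p\beta}{2}|Z_t|^p + C_p\bigl(1+|L^2_t|^p\bigr),
\]
and the comparison theorem yields $\sup_{t\leq T}|Z_t|^p \leq |y|^p + C_p\bigl(1 + \sup_{t\leq T}|L^2_t|^p\bigr)$. Since $|Y^{x,y}_t|^p \leq 2^{p-1}(|Z_t|^p + |L^2_t|^p)$, taking expectations and invoking the standard self-similarity/BDG estimate $\EE\sup_{t\leq T}|L^2_t|^p \leq C_p T^{p/\alpha}$ for $p<\alpha$ (established as in the proof of Lemma \ref{PMY}, together with $T \geq 1$) delivers \eref{FEq}.

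The main obstacle is \eref{FEq}: one cannot simply integrate the differential bound from \eref{FEq0} since it only controls $\EE U(Y_t)$ at each fixed $t$, not the supremum. The subtraction $Z_t = Y^{x,y}_t - L^2_t$ is the essential trick, converting the SDE into a smooth random ODE on which dissipativity can be used pathwise, and relegating all stochastic fluctuations to the explicit term $L^2_t$ for which sharp supremum moment bounds are already known.
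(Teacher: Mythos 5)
Your argument for \eref{FEq0} coincides with the paper's, which simply invokes the same It\^o/Lyapunov computation as for \eref{Y} and omits the details; the only imprecision is that the large-jump compensator is not literally bounded but is absorbed into the dissipative term via Young's inequality, exactly as the term $J_3$ there. For \eref{FEq}, however, you take a genuinely different and correct route. The paper applies It\^o's formula directly to $Y^{x,y}_t$ with the $T$-dependent Lyapunov function $U_T(y)=(|y|^2+T^{2/\alpha})^{p/2}$ and jump threshold $|z|\le T^{1/\alpha}$, splits into drift, small-jump martingale, small-jump compensator and large-jump pieces, and controls the supremum of the martingale piece by the Burkholder--Davis--Gundy inequality together with an absorption of $\frac{1}{4}\,\EE\bigl(\sup_{t\le T}|Y^{x,y}_t|^p\bigr)$ into the left-hand side. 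Your subtraction $Z_t=Y^{x,y}_t-L^2_t$ instead exploits the additivity of the noise to reduce the SDE to a random ODE, on which \eref{sm} (with $y_1=Z_t+L^2_t$, $y_2=L^2_t$) yields the pathwise bound $\sup_{t\le T}|Z_t|^p\le |y|^p+C_p\bigl(1+\sup_{t\le T}|L^2_t|^p\bigr)$, and all stochastic analysis is delegated to the scaling identity $\EE\sup_{t\le T}|L^2_t|^p=T^{p/\alpha}\,\EE\sup_{s\le 1}|L^2_s|^p<\infty$ for $p<\alpha$. This is cleaner and avoids any BDG estimate for the solution process itself; the trade-offs are that it is tied to additive noise (the paper's Lyapunov computation would tolerate a state-dependent jump coefficient), that the coefficient of $|y|^p$ becomes $2^{p-1}$ rather than $1$ (harmless for every use of \eref{FEq}, e.g.\ in \eref{Yvare} and \eref{F5.5}), and that one should regularize $|Z_t|^p$ near $Z_t=0$ when $p<2$, say by working with $(|Z_t|^2+\delta)^{p/2}$ and letting $\delta\to 0$.
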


\begin{proof}

The estimate \eref{FEq0} can be proved easily using the same argument in the proof of \eref{Y}, we omit the details.
Now we prove \eref{FEq}.
For any fixed $T\geq 1$, define 
$$U_T(y):=(|y|^{2}+T^{2/\alpha})^{p/2},$$
which satisfies
\begin{eqnarray}
|DU_T(y)|=\left|\frac{py}{(|y|^2+T^{2/\alpha})^{1-p/2}}\right|\leq C_p|y|^{p-1},\label{DUT}
\end{eqnarray}
\begin{eqnarray}
\|D^2 U_T(y)\|=\left\|\frac{p I_{d_2}}{(|y|^2+T^{2/\alpha})^{1-p/2}}-\frac{p(p-2)y\otimes y}{(|y|^2+T^{2/\alpha})^{2-p/2}}\right\|\leq C_p T^{\frac{2}{\alpha}\left(\frac{p}{2}-1\right)},\label{D^2UT}
\end{eqnarray}
where $I_{d_2}$ is the $d_2\times d_2$ identity matrix.
By It\^o's formula and (\ref{555}), we get
\begin{align}
&U_T(Y^{x,y}_t)=U_T(y)+\int^t_0\frac{ \langle f(x,Y^{x,y}_s ), p Y^{x,y}_{s}\rangle}{(|Y^{x,y}_s|^{2}+T^{2/\alpha})^{1-p/2}}ds\nonumber\\
&+\int^{t}_{0}\int_{|z|\leq T^{1/\alpha}}[U_T(Y^{x,y}_{s-}+z)-U_T(Y^{x,y}_{s-})]\widetilde{N}^{2}(ds,dz)\nonumber\\
&+\int^{t}_{0}\int_{|z|\leq T^{1/\alpha}}\left[U_T(Y^{x,y}_{s}+z)-U_T(Y^{x,y}_{s})
- \frac{\langle p Y^{x,y}_{s}, z\rangle}{(|Y^{x,y}_s|^{2}+T^{2/\alpha})^{1-p/2}}\right]\nu_2(dz)ds\nonumber\\
&+\int^{t}_{0}\int_{|z|>T^{1/\alpha}}[U_T(Y^{x,y}_{s-}+z)-U_T(Y^{x,y}_{s-})]N^{2}(ds,dz)
:=U_T(y)+\sum^{4}_{i=1}I_i(t).\label{I}
\end{align}
For the term $I_1(t)$, the condition \eref{sm} implies
$$
\langle f(x,y), y\rangle\leq \langle f(x,y)-f(x,0), y\rangle+\langle f(x,0), y\rangle\leq -\frac{\beta}{2}|y|^2+C.
$$
It is easy to see
\begin{eqnarray}
\mathbb{E}\left[\sup_{0\leq t\leq T}|I_1(t)|\right]\leq\!\!\!\!\!\!\!\!&& \int^{T}_{0}\frac{C_p}{(|Y^{x,y}_s|^{2}+T^{2/\alpha})^{1-p/2}}ds\leq C_p T^{p/\alpha+1-2/\alpha}.\label{I1}
\end{eqnarray}
For the term $I_2(t)$, by Burkholder-Davis-Gundy's inequality and \eref{DUT}, we have
\begin{eqnarray}
&&\mathbb{E}\left[\sup_{t\in [0,T]}|I_2(t)|\right]
\leq  C\EE\left[\int^T_0\int_{|z|\leq T^{1/\alpha}} \int^1_0 |DU_T(Y^{x,y}_s+z\xi)|^2 d\xi |z|^2 N^2(ds,dz)\right]^{1/2}\nonumber\\
\leq\!\!\!\!\!\!\!\!&& C\EE\left[\int^T_0\int_{|z|\leq T^{1/\alpha}} (|Y^{x,y}_s|^{2p-2}|z|^2+|z|^{2p}) N^2(ds,dz)\right]^{1/2}\nonumber\\
\leq\!\!\!\!\!\!\!\!&& C\EE\left\{\left(\sup_{s\in [0,T]}|Y^{x,y}_s|^{p-1}\right)\left[\int^T_0\int_{|z|\leq T^{1/\alpha}} |z|^2 N^2(ds,dz)\right]^{1/2}\right\}\nonumber\\
&&+C\EE\left[\int^T_0\int_{|z|\leq T^{1/\alpha}} |z|^{2p} N^2(ds,dz)\right]^{1/2}\nonumber\\
\leq\!\!\!\!\!\!\!\!&& \frac{1}{4}\mathbb{E}\left(\sup_{t\in [0,T]}|Y^{x,y}_t|^{p}\right)+C\left[\EE\int^T_0\int_{|z|\leq T^{1/\alpha}} |z|^2 \nu_2(dz)ds\right]^{p/2}\nonumber\\
&&+C\left[\EE\int^T_0\int_{|z|\leq T^{1/\alpha}} |z|^{2p} \nu_2(dz)ds\right]^{1/2}
\leq\frac{1}{4}\mathbb{E}\left(\sup_{t\in [0,T]}|Y^{x,y}_t|^{p}\right)+C_p T^{p/\alpha}.\nonumber
\end{eqnarray}
For the term $I_3(t)$, by Taylor's expansion and \eref{D^2UT}, we have
\begin{eqnarray}
\mathbb{E}\left[\sup_{t\in [0,T]}|I_3(t)|\right]\leq\!\!\!\!\!\!\!\!&& C_p T^{\frac{2}{\alpha}\left(\frac{p}{2}-1\right)}\int^T_0\int_{|z|\leq T^{1/\alpha}} |z|^2 \nu_2(dz)ds\leq C_{p}T^{p/\alpha}.\label{I3}
\end{eqnarray}
For the term $I_4(t)$, by \eref{DUT} again, we obtain
\begin{eqnarray}
\mathbb{E}\left[\sup_{t\in [0,T]}|I_4(t)|\right]
\leq\!\!\!\!\!\!\!\!&& C\EE\int^T_0\int_{|z|>T^{1/\alpha}} |U_T(Y^{x,y}_s+z)-U_T(Y^{x,y}_s)| \nu_2(dz)ds\nonumber\\
\leq\!\!\!\!\!\!\!\!&& C\EE\int^T_0\int_{|z|> T^{1/\alpha}} (|Y^{x,y}_s|^{p-1}|z|+|z|^{p}) \nu_2(dz)ds\nonumber\\
\leq\!\!\!\!\!\!\!\!&& C\EE\left\{\left(\sup_{s\in [0,T]}|Y^{x,y}_s|^{p-1}\right)\left[\int^T_0\int_{|z|> T^{1/\alpha}} |z| \nu_2(dz)ds\right]\right\}\nonumber\\
&&+C\int^T_0\int_{|z|> T^{1/\alpha}} |z|^{p} \nu_2(dz)ds\nonumber\\
\leq\!\!\!\!\!\!\!\!&& \frac{1}{4}\mathbb{E}\left(\sup_{t\in [0,T]}|Y^{x,y}_t|^{p}\right)+C\left[\int^T_0\int_{|z|>T^{1/\alpha}} |z| \nu_2(dz)ds\right]^{p}\nonumber\\
&&+C\int^T_0\int_{|z|> T^{1/\alpha}} |z|^{p} \nu_2(dz)ds\nonumber\\
\leq\!\!\!\!\!\!\!\!&& \frac{1}{4}\mathbb{E}\left(\sup_{t\in [0,T]}|Y^{x,y}_t|^{p}\right)+C_p T^{p/\alpha}.\label{I4}
\end{eqnarray}
Combining \eref{I}-\eref{I4}, we obtian
\begin{eqnarray*}
\mathbb{E}\left(\sup_{t\in [0,T]}|Y^{x,y}_t|^p\right)\leq\!\!\!\!\!\!\!\!&&C_p T^{p/\alpha}+|y|^p.
\end{eqnarray*}
The proof is complete.
\end{proof}

\begin{remark}\label{Re5.3}
Let $\{\tilde{Y}^{\vare}_{t}, t\geq 0\}$ solves
\begin{eqnarray*}
\tilde{Y}^{\vare}_{t}=\!\!\!\!\!\!\!\!&&y+\int^{t}_0f(X^{\vare}_{s\vare}, \tilde{Y}^{\vare}_{s})ds+\tilde{L}^2_{t},
\end{eqnarray*}
where $\tilde{L}^2_{t}$ is given in (\ref{R3.11}).
Using the condition $\sup_{x\in\RR^{d_1}}|f(x,0)|<\infty$ and by the same argument as in the proof of \eref{FEq}, we can   obtain that for any $T\geq 1$,
$$
\EE\left[\sup_{t\in [0,T]}|\tilde{Y}^{\vare}_{t}|^p\right]\leq C_p T^{p/\alpha}+|y|^p.
$$
On the other hand, note that $Y^{\vare}_{t\vare}$ in (\ref{R3.11}) and $\tilde{Y}^{\vare}_{t}$ have the same law. As a consequence, we have
\begin{eqnarray}
\EE\left(\sup_{t\in [0,T]}|Y^{\vare}_{t}|^p\right)=\EE\left(\sup_{t\in [0,T/\vare]}|\tilde{Y}^{\vare}_{t}|^p\right)\leq C_{p,T}\vare^{-p/\alpha}+|y|^p,\quad \forall \varepsilon\in (0,T].\label{Yvare}
\end{eqnarray}
\end{remark}

Concerning the averaged equation, we have the following result.

\begin{lemma} \label{PMA}
Suppose that the assumptions in Proposition \ref{Ergodicity} hold and that $b\in C^{1,1}_b$.
Then for any $x\in\RR^{d_1}$, Eq.(\ref{1.3}) has a unique solution $\bar{X}_t$. Moreover, for any $T>0$, there exists a constant $C_{T}>0$ such that for any $1\leq p<\alpha$,
\begin{eqnarray}
\mathbb{E}\left(\sup_{t\in [0, T]}|\bar{X}_{t}|^{p}\right)\leq C_{T}(1+|x|^{p}).\label{3.9}
\end{eqnarray}
\end{lemma}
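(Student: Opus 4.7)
The plan is to first establish the Lipschitz continuity of the averaged drift $\bar b$, then deduce existence, uniqueness, and the moment bound by standard SDE arguments tailored to the $\alpha$-stable driver.

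\textbf{Step 1: Lipschitz continuity of $\bar b$.} The key observation is that, by Proposition \ref{Ergodicity} applied to the bounded-derivative function $b(x,\cdot)$, for each $x\in\RR^{d_1}$ and any $y\in\RR^{d_2}$,
\[
\bar b(x)=\int_{\RR^{d_2}}b(x,z)\,\mu^x(dz)=\lim_{t\to\infty}\EE\, b(x,Y^{x,y}_t).
\]
Fix any $y\in\RR^{d_2}$. For $x_1,x_2\in\RR^{d_1}$ we write
\[
\EE\,b(x_1,Y^{x_1,y}_t)-\EE\,b(x_2,Y^{x_2,y}_t)
=\EE\bigl[b(x_1,Y^{x_1,y}_t)-b(x_2,Y^{x_1,y}_t)\bigr]
+\EE\bigl[b(x_2,Y^{x_1,y}_t)-b(x_2,Y^{x_2,y}_t)\bigr].
\]
Using the boundedness of $\nabla_x b$ and $\nabla_y b$ (which holds since $b\in C^{1,1}_b$) together with Lemma \ref{L3.6}, each expectation is bounded by $C|x_1-x_2|$ uniformly in $t$. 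Letting $t\to\infty$ yields $|\bar b(x_1)-\bar b(x_2)|\leq C|x_1-x_2|$, hence $\bar b$ is globally Lipschitz and in particular has linear growth $|\bar b(x)|\leq C(1+|x|)$.

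\textbf{Step 2: Existence and uniqueness.} Since $\bar b$ is globally Lipschitz, Eq.~\eqref{1.3} admits a unique strong solution by the standard theory for SDEs driven by Lévy noise (see e.g.\ \cite[Theorems 6.2.3 and 6.2.11]{A}), exactly as in the proof of Lemma \ref{PMY}.

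\textbf{Step 3: Moment estimate.} From the integral form
\[
\bar X_t=x+\int_0^t\bar b(\bar X_s)\,ds+L^1_t,
\]
the linear growth of $\bar b$ combined with the elementary inequality $|a+b+c|^p\leq C_p(|a|^p+|b|^p+|c|^p)$ gives
\[
\sup_{s\in[0,t]}|\bar X_s|^p\leq C_p|x|^p+C_{p,T}\int_0^t(1+|\bar X_s|^p)\,ds+C_p\sup_{s\in[0,T]}|L^1_s|^p.
\]
Taking expectation and using the bound $\EE\sup_{s\in[0,T]}|L^1_s|^p\leq C_{p,T}$ for $p<\alpha$ (established as in the proof of Lemma \ref{PMY} via the L\'evy--It\^o decomposition and Burkholder--Davis--Gundy's inequality), we obtain
\[
\EE\Bigl(\sup_{s\in[0,t]}|\bar X_s|^p\Bigr)\leq C_{p,T}(1+|x|^p)+C_{p,T}\int_0^t\EE\Bigl(\sup_{r\in[0,s]}|\bar X_r|^p\Bigr)ds,
\]
and Gronwall's inequality yields \eqref{3.9}.

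\textbf{Anticipated main difficulty.} The only non-routine step is Step 1: one must argue that $\bar b$ inherits Lipschitz regularity even though the invariant measure $\mu^x$ depends on $x$ in an a priori opaque way. Circumventing this by representing $\bar b(x)$ as the long-time limit $\lim_{t\to\infty}\EE b(x,Y^{x,y}_t)$ and then exploiting the uniform-in-$t$ contraction estimate of Lemma \ref{L3.6} is the conceptual crux; once this is in hand, Steps 2 and 3 are entirely standard.
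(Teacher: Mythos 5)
Your proposal is correct and follows essentially the same route as the paper: the Lipschitz continuity of $\bar b$ is obtained by approximating $\bar b(x_i)$ with $\EE b(x_i,Y^{x_i,y}_t)$ via Proposition \ref{Ergodicity}, comparing the two processes through Lemma \ref{L3.6}, and letting $t\to\infty$, after which well-posedness and the moment bound follow from the standard L\'evy SDE theory and the same Gronwall argument used for $X^\ep$ in Lemma \ref{PMY}.
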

\begin{proof}
By Proposition \ref{Ergodicity} and Lemma \ref{L3.6}, for any $t>0$, $x_1,x_2\in\RR^{d_1}$, we have
\begin{eqnarray*}
|\bar{b}(x_1)-\bar{b}(x_2)|\leq\!\!\!\!\!\!\!\!&&\left|\bar b(x_1)-\EE b(x_1, Y^{x_1,0}_t)\right|+\left| \EE b(x_2, Y^{x_2,0}_t)-\bar b(x_2)\right|\\
&&+\EE \left|b(x_1, Y^{x_1,0}_t)-b(x_2,Y^{x_2,0}_t)\right|\\
\leq\!\!\!\!\!\!\!\!&&Ce^{-\frac{\beta t}{2}}+C\left(|x_1-x_2|+\EE|Y^{x_1,0}_t-Y^{x_2,0}_t|\right)
\leq Ce^{-\frac{\beta t}{2}}+C|x_1-x_2|.
\end{eqnarray*}
Letting $t\rightarrow \infty$, we arrive at the Lipschitz continuous property of $\bar{b}$.
Hence by \cite[Theorems 6.2.3 and Theorem 6.2.11]{A}, there exists a unique solution $\{\bar X_t, t\geq 0\}$ to Eq. (\ref{1.3}). Moreover, estimate (\ref{3.9}) can be easily obtained by following a similar argument as in the proof of \eref{X}. The proof is complete.
\end{proof}

\vspace{0.3cm}
\textbf{Acknowledgment}. We would like to thank Professor Renming Song for useful discussion,  and  the referees for  carefully
reading  the manuscript and providing  many  suggestions and comments. This work is supported by the NNSF of China (12090011, 12071186, 11771187, 11931004) and the Project Funded by the Priority Academic Program Development of Jiangsu Higher Education Institutions.

\end{document}